\DeclareMathOperator{\prox}{Prox}
\newtheorem{lemma}{Lemma}
\newtheorem{proposition}{Proposition}
\newtheorem{theorem}{Theorem}
\newtheorem{Def}{Definition}
\newtheorem{remark}{Remark}
\newcommand{\h}[1]{\mathbf{#1}}
\DeclareMathOperator{\sign}{sign}
\DeclareMathOperator{\dist}{dist}
\newcommand{\nn}{\nonumber}
\begin{document}
\bibliographystyle{IEEEtran}
\title{On Partly Smoothness, Activity Identification and  Faster Algorithms of  $L_1$ over $L_2$  Minimization}

\author{ Min Tao, Xiao-Ping  Zhang,~\IEEEmembership{Fellow,~IEEE} and Zi-Hao Xia
\thanks{This work of M. Tao is supported partially by the Natural Science Foundation of China (No.
11971228) and partially by
Jiangsu University QingLan Project. The work of X.-P. Zhang is supported by the Natural Sciences and
Engineering Research Council of Canada (NSERC), Grant No. RGPIN-2020-04661.}
\thanks{M. Tao is with the Department of Mathematics, National Key Laboratory for Novel Software Technology, Nanjing
University, Nanjing 210093, China. Email: taom@nju.edu.cn}
\thanks {X.-P. Zhang is with the Department of Tsinghua-Berkeley Shenzhen Institute, Tsinghua University, Beijing, China,
Department of Electrical, Computer and Biomedical Engineering, Toronto Metropolitan University,
Toronto ON M5B 2K3, Canada. Email: xpzhang@ieee.org}
\thanks{Z.-H. Xia is  with the Dept. of
   Mathematics,  Nanjing University.}
}

\maketitle

\begin{abstract}
The $L_1/L_2$ norm ratio arose as a sparseness measure
and attracted a considerable amount of attention due to  three merits: (i) sharper approximations of $L_0$ compared to the $L_1$;
(ii) parameter-free and scale-invariant; (iii) more attractive than $L_1$ under highly-coherent matrices.

In this paper, we first establish the partly smooth property of $L_1$ over $L_2$ minimization relative to an active manifold ${\cal M}$ and also demonstrate its prox-regularity property. Second, we reveal that ADMM$_p$ (or ADMM$^+_p$) can identify the active manifold within a finite iterations. This discovery contributes to a deeper understanding of the optimization landscape associated with $L_1$ over $L_2$ minimization.
Third, we propose a novel heuristic algorithm framework that combines ADMM$_p$ (or ADMM$^+_p$) with a globalized semismooth Newton method tailored for the active manifold ${\cal M}$. This hybrid approach leverages the strengths of both methods to enhance convergence.
Finally, through extensive numerical simulations, we showcase the superiority of our heuristic algorithm over existing state-of-the-art methods for sparse recovery.

\end{abstract}

\begin{IEEEkeywords}
	 Sparse recovery,  partly smooth, Prox-regularity, active set, nonsmooth analysis, identifiable surface
\end{IEEEkeywords}

\maketitle
	\section{Introduction}

In scientific and engineering domains, the primary goal is to acquire a low-dimensional representation from high-dimensional data. This objective heavily relies on the critical assumption of sparsity. Sparsity has proven to be essential in effectively addressing diverse applications across fields such as compressive sensing \cite{5466604,WenYinGoldZhang10,Shi2010AFH}, machine learning \cite{hastie2009elements}, image processing \cite{KTF11,Audrey15}, and matrix factorization \cite{8653529}.

The use of the $L_1$ norm over $L_2$ norm in nonconvex regularized models has gained increasing attention in the field of compressive sensing \cite{ELX13, Audrey15,YEX14, Tao20, TaoZhang23,XU2021486,ZengYuPong20,RWDL19,PHAM201796}. This ratio is known for its ability to promote sparsity, and exhibits desirable properties such as scale-invariance \cite{5238742} and parameter-free characteristics and is much more efficient than $L_1$ when  sensing matrix
highly-coherent \cite{DE03,GN03}. More discussion can be found in \cite{YEX14,RWDL19}.
 As a result, $L_1/L_2$ minimization has found widespread applications in various domains; see, e.g., \cite{6638838, KTF11, Audrey15,PHAM201796, JIA2018198}.

Much of the existing literature focuses on the unconstrained,
\begin{eqnarray}
\label{L1o2uncon}
\min _{{\h x}\in {\cal X}} F_{u}({\h x}):=\gamma\frac{\|{\h x}\|_1}{\|{\h x}\|_2}+\Phi({\h x}),
\end{eqnarray}
where
 ${\cal X}=\mathbb R^n/{\mathbb R}^n_+$ (resp. arbitrary signal and nonnegative signal) and the data-fitting term of $\Phi:{\mathbb R}^m \rightarrow {\mathbb R}$ differentiable and
 the balance parameter $\gamma>0$.
In this paper, we focus on the model (\ref{L1o2uncon}).

The equivalence between $L_1/L_2$ and $L_0$ for signal reconstruction under different conditions has been studied by Yin et al. \cite{YEX14}, Xu et al. \cite{XU2021486}, and Zhou and Yu \cite{ZHOU2019247}. However, due to the nonconvex and nonsmooth nature of $L_1/L_2$ minimization, finding a global solution is challenging. Consequently, various algorithms have been developed to search for stationary points.
Specifically,  the scaled gradient projection method (SGPM) \cite{ELX13, YEX14} has been proposed for (\ref{L1o2uncon}) with ${\cal X}={\mathbb R}_+^n$.
The alternating direction method of multipliers (ADMM) \cite{RWDL19} with double variable duplication and accelerated schemes \cite{WYYL20} has been proposed for the following constrained model (\ref{L1o2con}) with $\sigma=0$ for noiseless data.

\begin{eqnarray}
\label{L1o2con}
\min _{{\h x}\in {\cal X}} F_c({\h x}):=\frac{\|{\h x}\|_1}{\|{\h x}\|_2}+\iota_{\{{\h x}:\|A{\h x}-{\h b}\|_2\le \sigma\}}(\h x).
\end{eqnarray}

The constrained model (\ref{L1o2con}) aims to address the recovery from observations corrupted by Gaussian noise, where $\sigma$ corresponds to the noise level. Specifically, $\sigma=0$ indicates noise-free observations, while $\sigma>0$ is associated with noisy data. The moving-balls-approximation-based algorithms \cite{ZengYuPong20} have been developed to address the constrained model (\ref{L1o2con}) in both noise-free and noisy scenarios. The intricate connections between (\ref{L1o2uncon}) and (\ref{L1o2con}), we refer to \cite{ZengYuPong20,Chen16} for more discussions.
 For solving (\ref{L1o2uncon}), the ADMM$_p$ \cite{Tao20} and ADMM$_p^+$ \cite{TaoZhang23} methods have been introduced in our previous work. These approaches come with global convergence guarantees and are tailored for cases where $\mathcal{X}=\mathbb{R}^{n}$ and $\mathcal{X}=\mathbb{R}_+^{n}$, respectively.
Additionally, an alternating forward-backward algorithm with a smoothing technique (SOOT) was proposed in \cite{Audrey15} for solving
 blind deconvolution  via $L_1$ over $L_2$ minimization.

 The superiority of ADMM$_p$ and ADMM$_p^+$ has been empirically validated in the context of sparse arbitrary/nonnegative signal recovery \cite{Tao20, TaoZhang23}. Notably, the explicit expression for all proximal points of the proximal operators of $L_{1}/L_{2}$ has been characterized in \cite[Theorem 3.2]{Tao20}. Furthermore, in \cite[Theorem 3.3]{Tao20} and \cite[Theorem 6]{TaoZhang23}, one of the proximal points of the proximal operators $L_{1}/L_{2}$ (i.e., $\frac{\|{\h x}\|_1}{\|{\h x}\|_2}$) and $(L_{1}/L_{2})^+$ (i.e.,
$\frac{\|{\h x}\|_1}{\|{\h x}\|_2}+\iota_{{\mathbb R}^n_+}({\h x})$), respectively, has been derived without any dependence on unknown parameters. These characterizations have paved the way for the development of practical solvers that compute exact solutions for these $L_{1}/L_{2}$ proximal operators, as demonstrated in algorithms; see, e.g., \cite[Algorithm 3.1]{Tao20} and \cite[Algorithm 1]{TaoZhang23}.

\subsection{Motivation}
The growing scale of datasets has prompted extensive research into uncovering the inherent sparsity pattern
 of (\ref{L1o2uncon}) during the early phases of iterative processes.
Proximity operators have emerged as a valuable tool for detecting sparsity patterns within iterates, as highlighted in studies such as \cite{LW04,VSGMFP15, LFP14,LW16}. A prominent example is the widely used $L_1$-norm. The associated proximity operator, known as soft-thresholding \cite{Dono95}, aids in identifying the support set \cite{LFP14,HYZ} under some non-degenerate conditions. This property, referred to as the identification property, has been extensively explored in both convex and nonconvex contexts, as demonstrated in works by \cite{JJ88, Burke1990OnIO, Wright93,Aris06,Aris14,7944626}, and others.

It is important to clarify that the identification property does not prescribe a specific algorithm but rather outlines the fundamental properties required. Several algorithms fulfill these criteria, including projected gradient methods \cite{1101194}, projected Newton methods \cite{doi:10.1137/0320018}, and forward-backward-type methods \cite{Liang17}.
A closely related concept is the notion of partly smoothness \cite{Lewis02} (see Definition \ref{def2.3}). Partly smoothness requires a function to exhibit smoothness along an active manifold while demonstrating sharp growth in directions away from that manifold. Building upon this concept, \cite{LW04} demonstrated that if a ${\cal C}^p$-partly smooth function ($p\ge 2$) (see \cite[Definition 2.3]{LW04}) is prox-regular (see Definition \ref{defprox}), the corresponding active manifold must be unique. Under non-degeneracy conditions and certain mild assumptions, \cite{LW04} further established that the adopted algorithm can identify the active manifold within a finite number of steps.
It is important to emphasize that partly smoothness is a property inherent to the studied problem, while the identification property is typically associated with the chosen algorithm.

In the present paper, we aim to address the following questions, ``Whether the  $L_1/L_2$ minimization have the
partly smooth property?" and ``Can ADMM$_p$ (or ADMM$_p^+$) identify the optimal manifold in a finite number of iterations when applied to (\ref{L1o2uncon})?"  We answer both questions affirmatively.

\subsection{This paper}
Theoretically, we establish the partly smooth and prox-regular properties of $L_1/L_2$ minimization. We then prove general results for identifying the active manifold in a finite number of iterations, relaxing conditions compared to the classical result outlined in \cite[Theorem 5.3]{LW04}.

Specifically, we extend the scope by relaxing the assumption from ``the function $f$ being ${\cal C}^p$-partly smooth ($p\ge2$)" as stated in \cite[Theorem 5.3]{LW04} to ``$f$ being partly smooth, i.e., ${\cal C}^1$ (see Definition \ref{def2.3})." Consequently, we demonstrate the identification property of ADMM$_p$ \cite{Tao20} and ADMM$_p^+$ \cite{TaoZhang23} under relatively mild conditions.

The finite identification property signifies the existence of a specific index number $K$. The sequence generated by either ADMM$_p$ or ADMM$_p^+$ enters the active manifold when the iteration number goes beyond $K$. Once the active manifold is identified, the optimization problem transforms into a low-dimensional minimization, allowing the use of high-order algorithms tailored for low-dimensional manifolds to significantly enhance convergence speed.

Efficiency of ADMM$_p$ or ADMM$_p^+$ for sparse recovery has been well-illustrated \cite{Tao20, TaoZhang23}, outperforming existing state-of-the-art methods such as SGPM \cite{YEX14, ELX13}, GIST (\cite[Algorithm 1]{GZLHY}), and monotone/nonmonotone APG \cite{LinLi15}, as well as SOOT \cite{Audrey15}. Therefore, leveraging ADMM$_p$ or ADMM$_p^+$ as an initial phase and integrating a globalized semismooth Newton method on the active manifold will further accelerate the performance.

We clarify that the reasons for adopting a globalized semismooth Newton method are: (i) The objective function in (\ref{L1o2uncon}) is ${\cal C}^1$ (continuously differentiable) rather than ${\cal C}^2$ (twice continuously differentiable) when restricted to the active manifold; (ii) There is no guarantee that the objective function remains convex when restricted to the active manifold, even if the manifold is a space.

In addition, although we know that the iterates will eventually identify the active manifold within a finite number of steps, an explicit expression for the index $K$ (enter the active manifold) is unavailable. This prompts us to employ heuristic techniques for active manifold identification. More specifically, we introduce a two-phase heuristic acceleration framework:
\begin{itemize}
\item Initially, either ADMM$_p$ or ADMM$_p^+$ is employed to identify the active manifold by comparing adjacent support sets. The support sets remain unchanged over a specified number of iterations denoted as $T$ (where $T \in \mathbb{N}_+$).

\item  In the second phase, we apply a globalized semismooth Newton method on the active manifold to achieve rapid convergence.
\end{itemize}
It's worth mentioning that there is no theoretical guarantee for the heuristic strategy to identify the active manifold; however, we observe that it practically performs very well. With the identified active manifold, the semismooth Newton method is incorporated to solve a lower-dimension problem. Finally, we establish the eventual superlinear/quadratic convergence rate of the approach under some mild conditions.

We conducted extensive experiments to analyze algorithmic behaviors and compare various sparse recovery models. Our tests encompassed sparse recovery scenarios involving highly coherent compressive matrices using synthetic data, as well as real datasets. Our newly proposed heuristic acceleration algorithm demonstrated the ability to identify the active manifold within a finite number of steps. Notably, it significantly outperforms other state-of-the-art methods for $L_1$ over $L_2$ minimization problem.

\subsection{Contribution}
\begin{enumerate}
	
	\item {\bf Partly smoothness and prox-regularity of $L_1$ over $L_2$.} We prove the {\it partly smooth} and {\it prox-regular} properties of the $L_1/L_2$ model (\ref{L1o2uncon}), which are the theoretical basis for establishing the finite identification of  ADMM$_p$ \cite{Tao20} and ADMM$_p^+$ \cite{TaoZhang23}.

\item {\bf Support detection.}
ADMM$_p$ (or ADMM$_p^+$) is shown to identify the active manifold in finite steps, known as {\it support detection} in compressive sensing.
 We  relax the conditions in the classical results \cite[Theorem 5.3]{LW04} by
 replacing   ${\cal C}^p$-partly smooth ($p\ge 2$) with  partly smooth (i.e., ${\cal C}^1$), broadening the applicability of ADMM$_p$ (or ADMM$_p^+$) for finite identification.
This expansion also clarifies our choice of the semismooth Newton method on the active manifold rather than the standard Newton algorithm.
	\item {\bf Fast algorithm with high accuracy.}
A heuristic acceleration framework is introduced, employing ADMM$_p$/ADMM$_p^+$ for active manifold identification and the globalized semismooth Newton method for convergence acceleration. Theoretical analysis demonstrates superlinear/quadratic convergence rates under mild conditions within the active manifold.
Extensively numerical simulations verify its high-order convergence rate.

\end{enumerate}

\subsection{Organization}
The rest of this paper proceeds as follows. We describe the notations and definitions in Section II and present a lemma to
characterize the prox-regularity of
fraction function.
 Section III elaborates on verifying the partly smoothness property of $L_1$ over $L_2$ minimization problem.
Section IV establishes the general results for identifying the active manifold by relaxing
the conditions in the classical result \cite[Theorem 5.3]{LW04}. Equipped with this, we
 validate ADMM$_p$ \cite{Tao20} (or ADMM$_p^+$ \cite{TaoZhang23}) with the property of identifying the active manifold in finite iterations under much weaker conditions and along with non-degenerate assumption. Furthermore, we exploit sufficient conditions to ensure the non-degenerate condition.
Additionally, we introduce a heuristic acceleration framework on the active manifold in Section IV to achieve a faster convergence rate for the $L_1$ over $L_2$ minimization problem.
In Section V, we present extensive experiments to showcase the superior performance of our proposed heuristic acceleration framework for sparse recovery, using both synthetic and real datasets.
Finally, our conclusions are presented in Section VI.

\section{Preliminary} \label{Sec-Preliminaries}

 The bold letter denotes the vector. $x_{i}$  and $|{\h x}|$ denote the $i$-th entry and the absolute value vector of
${\h x}$ in entrywise.
  $\|{\h x}\|_p$  denote its
 $p$-norm (where $p=1,2,1/2$).
  $\sign(\h x)$ is defined as a vector of the same length as $\h x$ with its $i$-component equal to
   the set $[-1,1]$ if ${x}_i=0;$ otherwise being the sign of each component of ${\h x}$.
  ${\text{supp}}({\h x})$ denotes its support set.
  The notation of ${{\mathbb R}^n_+}/{\mathbb R}^n$ means ${{\mathbb R}^n_+}$ or ${\mathbb R}^n$.
Let ${{\h e}}$ and ${\h e}_i$ be the vectors with all entries equal to $1$ and the $i$th entry equal to $1$ while the others are zero, respectively.
   $I_{n}$ is $n \times n$ identity matrix.
Given a set of ${\mathcal D}$,
 ${\rm co}{\mathcal D}$, $\mathcal{D}^c$ and ${\text{\rm ri}}({\cal D})$  denote its  convex hull, its complementary and its relative interior.
  If $\mathcal{D} \subseteq [n]$ ($[n]:=\{1,\cdots,n\}$), we use $\sharp(\mathcal{D})$  to present its cardinality.
For a closed set $\mathcal{S}$, we use $\iota_{\cal S}(\mathrm{x})$ to denote its
 indicator function. Given a matrix $A \in \mathbb{R}^{n \times n}$ or a vector $\h{x} \in \mathbb{R}^{n}$ and an index set $\Lambda \subseteq[n]$,  $A_{\Lambda,\Lambda}$, ${\h x}|_{\Lambda}$ to denote
$A(i,j)_{i,j\in\Lambda}$ and ${\h {x}}(i)_{i \in \Lambda}$, respectively.
 $A\succ{\bf 0}$ means positive definite, and $\lambda_{\min}(A)$ denotes the minimum eigenvalue of $A$.
 ``{\text{\rm dist}}" represents the distance function.
A ${\cal C}^1$ function $f$ means its gradient Lipschitz continuous. Analogously, we can define ${\cal C}^p$ function ($p\ge2$).

A set ${\cal M}$ is a manifold around a point ${\h x}$ if ${\h x}\in{\cal M}$ and
there is an open set $V$ containing ${\h x}$ such that
${\cal M}\cap V=\{ {\h x}\in V | \Phi({\h x})=0\}$ where
the smooth function $\Phi$ has surjective derivative throughout $V$.

 The polar cone of a set $C$ is defined by
$C^o=\{ {\h y}\in {\mathbb R}^n : \langle{\h y},{\h x} \rangle \le 0, \;\forall\; {\h x}\in C\}.$
An extended-real-valued function $f:{\mathbb R}^n\rightarrow (-\infty,+\infty]$ is said to be
 proper if ${\text{dom}} f=\{{\h x}\in {\mathbb R}^n| f(\h x)<\infty \}$ is nonempty.   We denote the extended reals by ${\overline{\mathbb{R}}}=[-\infty,+\infty]$.

 We review some definitions from \cite{RockWetsVA}. Consider a function $h: {\mathbb R}^n\rightarrow {\overline{\mathbb{R}}}$ finite at a point ${\h x}\in{\mathbb R}^n$,
 the subderivative $dh({\h x})(\cdot): {\mathbb R}^n\rightarrow {\overline{\mathbb{R}}}$ is defined by
 \begin{eqnarray*} dh({\h x})({\bar{\h w}}) = \mathop{\lim_{\tau\downarrow 0}\inf_{{\h w}\to {\bar{\h w}}}}\frac{h({\h x}+\tau {\h w})-h({\h x})}{
 \tau}, \end{eqnarray*}
and the regular subdifferential ${\hat {\partial}} h({\hat{\h x}})$
 and the limiting subdifferential $\partial h({\hat{\h x}})$   are defined as
\begin{eqnarray*}
\begin{array}{l}
		{\hat\partial} h({\hat{\h x}})=\left\{{\h v}\Big |{\mathop{\lim}\limits_{\h x\to{\hat{\h x}}}}\inf\limits_{{\h x}\neq {\hat{\h x}}} {\displaystyle\frac{h(\h x)-h(\hat{\h x})-\langle {\h v},{\h x}-{\hat{\h x}}\rangle}{\|{\h x}-\hat{\h x}\|_2}}\ge 0\right\},\\[0.4cm]
		\partial h({\hat{\h x}})=\left\{{\h v}\Big | \exists {\h x}^r\rightarrow {\hat{\h x}},h({\h x}^r)\rightarrow h(\hat{\h x}),
 {\h v}^r\in{\hat\partial} h({\h x}^k),\! {\h v}^k\rightarrow{\h v}\!\! \right\},
		\end{array}
\end{eqnarray*}
respectively.
The  horizon subdifferential is defined by
\begin{eqnarray*} \partial^\infty h({\h x})\!\!=\!\!\left\{\lim\limits_{r} \lambda_r {\h v}^r\Big| {\h v}^r\in \partial h({\h x}^r),{\h x}^r\to {\h x},
h({\h x}^r)\to h({\h x}),\lambda_r \downarrow 0\right\}.\end{eqnarray*}
Suppose that $h({\hat{\h x}})$ is finite and $\partial h({\hat{\h x}})\neq\emptyset$,
$h$ is {\it regular} at ${\hat{\h x}}$ if and only if $h$ is locally lower semicontinuous at ${\hat{\h x}}$
with $\partial h({\hat{\h x}}) ={\hat\partial} h({\hat{\h x}})$ and
$\partial^\infty h({\hat{\h x}}) ={\hat \partial}h({\hat {\h x}})^\infty$ \cite[Corollary 8.11]{RockWetsVA}, where ${\hat \partial}h({\hat {\h x}})^\infty$  is recession cone (in the sense of convex analysis).

Let $f$ be a proper lower semicontinuous function,  its proximal mapping defines as $$\prox_f({\h v}) = \arg\min_{\h x} \Big\{ f({\h x}) + \frac{1}{2}\|{\h x}-{\h v}\|_2^2\Big\}.$$
  By defining ${\frac{\|{\bf 0}\|_1}{\|\bf 0\|_2}}=1$ \footnote{For any vector ${\h x}\in{\mathbb R}^n$, $\frac{\|{\h x}\|_1}{\|\h x\|_2}\ge 1$ due
  to Cauchy-Schwarz inequality. Another benefit of defining $\frac{\|{\bf 0}\|_1}{\|{\bf 0}\|_2}=1$ is possible to exclude
  the stationary being zero vector. We refer the reader to \cite{Tao20,TaoZhang23} for more discussions.}, the objective function of the following
  proximal operator
  is lower semicontinuous,
\begin{eqnarray}\label{close} \prox^{\rho}_{[(\|{\cdot}\|_1+\iota_{\cal X}(\cdot))/\|{\cdot}\|_2]}(\h q)\!\!=\!\!\mathop{{\arg\min}}_{\h x\in{\cal X}}\!\Big(\frac{\|\h x\|_1}{\|\h x\|_2}\!+\!\frac{\rho}{2}\|\h x-\h q\|_2^2\Big). \nn\\
\end{eqnarray}
Thus, the proximal operator of $ \prox^{\rho}_{[(\|{\cdot}\|_1+\iota_{\cal X}(\cdot))/\|{\cdot}\|_2]}$ is well-defined
 due to \cite[Definition 1.22]{RockWetsVA}.
One of {\it the proximal points}
 for ${\cal X}={\mathbb R}^n$ and ${\mathbb R}^n_+$ has been characterized in
{\it a closed-form}  in \cite[Theorem 3.3]{Tao20},
\cite[Theorem 6]{TaoZhang23},
respectively.

\noindent Next, we review several concepts from variational analysis.
\begin{Def}\label{defprox} ({\bf Prox-regularity}) \cite[Definition 2.1]{PR96}
A function $f$ is prox-regular at a point ${\bar{\h x}}$ for a subgradient ${\bar {\h v}}\in \partial f({\bar{\h x}})$ if
$f$ is finite at ${\bar{\h x}}$, locally lower semi-continuous around ${\bar{\h x}}$, and
there exists $\rho>0$ such that
$$ f({\h x}')\ge f({\h x})+ \langle{\h v},{\h x}'-{\h x} \rangle-\frac{\rho}{2}\|{\h x}'-{\h x}\|_2^2$$
whenever ${\h x}$ and ${\h x}'$ are near ${\bar{\h x}}$ with $f({\h x})$ near $f({\bar{\h x}})$ and ${\h v}\in \partial f({\h x})$ is
near ${\bar{\h v}}$.
Furthermore, $f$ is prox-regular at ${\bar{\h x}}$ if it is prox-regular at ${\bar{\h x}}$ for every  ${\bar{\h v}}\in\partial f({\bar{\h x}})$.
\end{Def}
The terminology ``partly smooth" as defined in \cite[Definition 2.7]{Lewis02} differs from that presented in \cite[Definition 2.3]{LW04}. Notably, the difference arises from the level of smoothness of the concerned function $f$. While the characterization of ``partly smooth" in \cite[Definition 2.3]{LW04} necessitates the function $f$ to exhibit ${\cal C}^p$ smooth ($p\ge 2$), the criteria outlined in \cite[Definition 2.7]{Lewis02} imposes a more relaxed condition of the function $f$ being ${\cal C}^1$. We  adopt  the latter criterion.
\begin{Def}\label{def2.3} ({\bf Partly Smooth}) \cite[Definition 2.7]{Lewis02} Suppose that the set ${\cal M}\subset{\mathbb R}^n$ contains the point $\h x$. The function $f: {\mathbb R}^n\rightarrow{\overline {\mathbb R}}$
is partly smooth at $\h x$ relative to ${\cal M}$ if ${\cal M}$ is a manifold around $\h x$ and the following four properties hold:
\begin{itemize}
\item[(i)]({\bf Restricted Smoothness}) the restriction $f|_{\cal M}$ is smooth around $\h x$;
\item[(ii)] ({\bf Regularity}) at every point close to $\h x$ in ${\cal M}$, the function $f$ is regular and has a subgradient;
\item[(iii)] ({\bf Normal Sharpness}) $df({\h x})(-{\h w})>-df({\h x})({\h w})$ for all nonzero directions $\h w$ in $N_{\cal M}({\h x})$;
\item[(iv)] ({\bf Subgradient Continuity}) the subdifferential map $\partial f$ is continuous at $\h x$ relative to ${\cal M}$.
\end{itemize}
\end{Def}
We say $f$ is partly smooth relative to a set ${\cal M}$ if ${\cal M}$ is a manifold and $f$ is partly smooth
at each point in ${\cal M}$ relative to ${\cal M}$.
Property (i) ensures that $f$ is continuous relative to ${\cal M}$, therefore  the subdifferential mapping is always
outer semicontinuous relative to ${\cal M}$ \cite[Proposition 8.7]{RockWetsVA}. Thus, property (iv) can be replaced
with ``the subdifferential $\partial f({\h x})$ is {\it inner semicontinuous} at ${\h x}$ relative to ${\cal M}$".
It means that for any sequence  ${\h x}^r$ in ${\cal M}$ approaching ${\h x}$ and
any subgradient ${\h y}\in\partial f({\h x})$, there exists subgradients ${\h y}^r\in \partial f({\h x}^r)$ approaching ${\h y}$.

Below, we review the concepts of semismooth and strongly semismooth \cite{doi:10.1137/0315061} and  LC$^1$ function \cite{FACCHINEI1995131}.
\begin{Def} $F: {\mathbb R}^n\rightarrow {\mathbb R}$ is directionally differentiable at ${\h x}$.
We say that  $F$ is
{\it semismooth} at ${\h x}$ if  for  any $V\in\partial F({\h x}+{\h h})$,
\begin{eqnarray*}\label{sem} F({\h x}+{\h h})-F({\h x})-V{\h h}=o(\|{\h h}\|_2).\end{eqnarray*}
$F$ is said to be {\it strongly semismooth} at ${\h x}$ if  for any  $V\in\partial F({\h x}+{\h h})$,
\begin{eqnarray*} F({\h x}+{\h h})-F({\h x})-V{\h h} =O({\|{\h h}\|_2^2}).\end{eqnarray*}
\end{Def}
\begin{Def}
A function $f:{\mathbb R}^n \rightarrow{\mathbb R}$ is said
to be an {\rm LC}$^1$ function on an open set $\Omega$ if
$f$ is continuously differentiable on $\Omega$, and
$\nabla f$ is locally lipschitz on $\Omega$.
\end{Def}
For a ${\rm LC}^1$ function $f$, its gradient is locally Lipschitz on $\Omega$; the gradient is differentiable almost everywhere in $\Omega$, allowing for the definition of its generalized Jacobian in Clark's sense, also referred to as the generalized Hessian of $f$ \cite{Clark90}. We define the generalized Hessian of $f$ at $\mathbf{x}$ to be the set $\partial^2 f(\mathbf{x})$ of $n\times n$ matrices defined by:
\begin{eqnarray*}&\partial^2 f(\h x):={\rm co}\{ H\in\mathbb R^{n\times n}: \exists \;{\h x}^k\to {\h x} \;\mbox{with}\nn\\
&\nabla f\;\mbox{differentiable at}\;\h x^k\;\mbox{and}\; \nabla^2 f(\h x^k)\to H\}.\end{eqnarray*}
Note that $\partial^2 f$ is a nonempty, convex, and compact set of symmetric matrices.

We proceed to verify the following lemma, a pivotal step in establishing the prox-regularity of the fraction function.
 \begin{lemma}
 \label{proxreg}
 Let $f,\;g:{\mathbb R}^n \rightarrow (-\infty,+\infty]$ be proper lower semicontinuous functions,
 and  finite at
${\bar {\h x}}\in {\mathbb R}^n$. Suppose that $f$ and $g$ are locally Lipschitz continuous around ${\bar{\h x}}$ and
$g({\bar{\h x}})> 0$ and $f$ is prox-regular at the point ${\bar{\h x}}$.
$g$ is strictly differentiable at ${\bar{\h x}}$,
and $\nabla g$ is locally Lipschitz continuous around ${\bar{\h x}}$. Then, the function $\displaystyle{\frac{f}{g}}$ is prox-regular at the point
${\bar{\h x}}$.
 \end{lemma}
\begin{proof}See appendix \ref{AppA}. \end{proof}

\begin{remark}
The significance of this lemma lies in its pivotal role in establishing the prox-regularity property of $L_1$ over $L_2$ (with
 or without nonnegative constraint) regularization function,
 which  is a crucial step to establish the finite identification of  ADMM$_p$ and ADMM$_p^+$.
\end{remark}
\section{Partly smooth}\label{wellDef}
In this section, we illustrate the partly smoothness of the model (\ref{L1o2uncon})  whenever ${\cal X}={{\mathbb R}^n_+}/{\mathbb R}^n$.
To carry out a unified analysis, we denote
\begin{eqnarray} \label{hfun} h(\h x)=\frac{\|\h x\|_1}{\|\h x\|_2}+\iota_{\cal X}(\h x).\end{eqnarray}
For any vector ${\h x}_0(\neq{\bf 0})$, we define the manifold
\begin{eqnarray*} {\cal M}_{{\h x}_0}=\{ {\h x}\in{\mathbb R}^n \;|\; {\text{supp}}({\h x})={\text{supp}}({\h x}_0)\}.\end{eqnarray*}
We first characterize the normal cone (normal space) to ${\cal M}_{{\h x}_0}$.
$$N_{{\cal M}_{{\h x}_0}}(\h x)=\left\{{\h w}\in {\mathbb R}^n\;|\; {\h w}_j=0,\; j\in{\text{supp}}({\h x}_0)\right\}.$$
\begin{proposition} \label{PhiPS} Suppose that $\Phi({\h x})$ is smooth, the objective function
$F_u({\h x})+\iota_{\cal X}(\h x)$ ($F_u({\h x})$ defined in (\ref{L1o2uncon})) is partly smooth at ${\h x}_0\;(\neq{\bf 0})$ relative to ${\cal M}_{{\h x}_0}$.\end{proposition}
\begin{proof} See appendix \ref{AppB}. \end{proof}

\begin{remark}
By taking $\Phi({\h x})=\frac{1}{2}\|A{\h x}-{\h b}\|_2^2$ in (\ref{L1o2uncon}), the resulting model is partly smooth (i.e., the model (1.2) \cite{Tao20}
and the model (2) \cite{TaoZhang23}).\end{remark}

\begin{remark} If $\Phi({\h x})$ is smooth except some points (the set of these irregular points denoted by ${\cal J}$).
Then, the objective function $F_u$ in (\ref{L1o2uncon}) is partly smooth at ${\h x}_0$ (${\h x}_0\neq{\bf 0}$ and ${\h x}_0\not\in{\cal J}$)
relative to ${\cal M}_{{\h x}_0}$. For example, $\Phi({\h x})=\|A{\h x}-{\h b}\|_2$ and
${\cal J}=\{ {\h x}|A{\h x}-{\h b}=0\}$.
The resulting $F_u$ in (\ref{L1o2uncon}) is partly smooth at ${\h x}_0$ (${\h x}_0\neq{\bf 0}$ and ${\h x}_0\not\in{\cal J}$) relative
to ${\cal M}_{{\h x}_0}$.
 \end{remark}

\section{Activity identification}\label{ADMMS}

In \cite{Tao20, TaoZhang23}, a closed-form solution for the proximal operator of $(L_1/L_2)$ and $(L_1/L_2)^+$ was derived in \cite[Theorem 3.3]{Tao20} and \cite[Theorem 6]{TaoZhang23}, respectively. Additionally, practical solvers for finding global optimizers of (\ref{close}) for ${\cal X}={\mathbb R}^n$ and ${\cal X}={\mathbb R}^n_+$ were developed, respectively; see, e.g.,  \cite[Algorithm 3.1]{Tao20} and \cite[Algorithm 1]{TaoZhang23}.
These practical solvers have been utilized in the context of ADMM$_p$ \cite[Algorithm 4.1]{Tao20} and ADMM$_p^+$ \cite[Algorithm 2]{TaoZhang23}, demonstrating excellent numerical performance for solving (\ref{L1o2uncon}) with $\Phi(\h x)=\frac{1}{2}\|A{\h x}-{\h b}\|_2^2$.
We unify ADMM$_p$ and ADMM$_p^+$ with general $\Phi$ as follows:

\begin{subequations} \label{ADMMschI}
	\begin{numcases}{\hbox{\quad}}
	\label{xsub-ADMMI}\h x^{k+1}\in \prox^{\beta_{\gamma}}_{[(\|{\cdot}\|_1+\iota_{\cal X}(\cdot))/\|{\cdot}\|_2]}\left({\h y}^k-\frac{1}{\beta}{\h z}^k\right)\\[0.1cm]
	\label{ysub-ADMMI}\h y^{k+1}=\arg\min_{\h y} \Phi(\h y)+\frac{\beta}{2}\left\|{\h y}-{\h x}^{k+1}-\frac{{\h z}^{k}}{\beta}\right\|_2^2 \\[0.1cm]
	\label{zsub-ADMMI}\h z^{k+1}=\h z^k+\beta(\h x^{k+1}-\h y^{k+1}),
	\end{numcases}
\end{subequations}
where $\beta_{\gamma}=\beta/\gamma$.
Our previous studies established the global convergence of (\ref{ADMMschI}) when  ${\cal X}={\mathbb R}^n$ or ${\cal X}={\mathbb R}_+^n$, respectively in \cite{Tao20} and \cite{TaoZhang23}. In this paper, we aim to show the ability of ADMM$_p$ and ADMM$_p^+$ to identify the active manifold.
 In doing so, we first present a unified exposition of its convergence properties for the general $\Phi$.
 The proof is similar to \cite[Theorem 5.4]{Tao20} and \cite[Theorem 7]{TaoZhang23}.
 We
omit the proof here for brevity.

\begin{theorem}\label{theo2}
		Let $\{{\h w}^k:=({\h x}^k,{\h y}^k,{\h z}^k)\}$ be the sequence generated by   (\ref{ADMMschI}).	If $\beta>2L$ ($L$ denotes the Lipschitz constant of $\nabla \Phi$), we have
			 $\lim_{k\to\infty}\|{\h y}^k - \h y^{k+1}\|_2=0$.
	\end{theorem}
The global convergence of (\ref{ADMMschI}) is summarized below  \cite{Tao20,TaoZhang23}.
\begin{theorem}\label{thmglobal}
	Let  $\{\h w^k\}$ be the sequence generated by (\ref{ADMMschI}).
If  ${A^\top {\h b}}\not\in {\cal X}^o$ and $\beta>2L$, and $\{\h x^k\}$ is bounded, then
\begin{itemize}
\item[(i)]for ${\cal X}={\mathbb R}^n$, $\{{\h x}^k\}$ converges to  a stationary point ${\h x}^\infty$, i.e.,
\begin{eqnarray*}
0\in\gamma\left(\frac{\sign(\h x^{\infty})}{r}-\frac{a}{r^3}{\h x}^{\infty}\right)+\nabla\Phi({\h x}^\infty),\end{eqnarray*}
where $a=\|{\h x}^\infty\|_1$ and $r=\|{\h x}^\infty\|_2$;\\[-0.2cm]
\item[(ii)] for ${\cal X}={\mathbb R}^n_+$, $\{\h x^k\}$ converges to a d-stationary point \cite{9186389,PMA} of (\ref{L1o2uncon}), i.e.,
\begin{eqnarray*}\left\langle{\h x}-{\h x}^\infty, \gamma\left(\frac{{\h e}}{r}-\frac{a}{r^3}{\h x}^\infty\right) +\nabla \Phi({\h x}^{\infty})\right\rangle\ge 0,\nn\\
\quad\quad\quad\quad\quad\quad\quad\quad\quad\quad\;\forall\; {\h x}\in{\cal X}.\end{eqnarray*}
\end{itemize}
\end{theorem}
\subsection{Finite Identification of ADMM$_p$ and ADMM$_p^+$}
We elaborate on  the primary findings of (\ref{ADMMschI}) related to the identification of the true support set within a finite steps.
 First, we relax the requirement of ``the function $f$ be ${\cal C}^p$-partly smooth ($p\ge 2$)" as stated in \cite[Theorem 5.3]{LW04} to the condition of ``the function $f$ be partly smooth, i.e., $f$ be ${\cal C}^1$" (see Definition \ref{def2.3}).
This relaxation allows us to establish the finite identification property of ADMM$_p$ (or ADMM$_p^+$) under more relaxed  conditions.

 \begin{theorem}\label{identify}
Let the function $f$ be partly smooth at the point ${\bar{\h x}}$ relative to the manifold ${\cal M}$,
and prox-regular there, with ${\bf 0}\in{\text{\rm ri}}\partial f({\bar{\h x}})$.
Suppose ${\h x}_k\to {\bar{\h x}}$ and $f({\h x}_k)\to f({\bar{\h x}})$. Then,
${\h x}_k\in{\cal M}\; \mbox{for all large}\; k$
if and only if
${\text{\rm dist}}({\bf 0},\partial f({\h x}_k))\to 0.$
\end{theorem}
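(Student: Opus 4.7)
The plan is to reduce this function-level identification result to the set-level result Theorem S.\ref{identifycon} via the epigraphical lift of $f$. This is the same route used in \cite[Theorem 5.3]{LW04}; since Theorem S.\ref{identifycon} has already been relaxed from ${\cal C}^p$-partly smooth ($p\ge 2$) to merely partly smooth, that relaxation propagates to the function level essentially for free, so no new smoothness argument on ${\cal M}$ itself is required.

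First I would introduce the epigraph ${\cal S}:={\rm epi}(f)\subset{\mathbb R}^n\times{\mathbb R}$ together with its graph-manifold $\widetilde{\cal M}:=\{({\h x},f({\h x})):{\h x}\in{\cal M}\}$. By the standard correspondence between partly smoothness of a function and of its epigraph (\cite[Prop.~2.10]{Lewis02}), ${\cal S}$ is partly smooth at $({\bar{\h x}},f({\bar{\h x}}))$ relative to $\widetilde{\cal M}$, and prox-regularity of $f$ at ${\bar{\h x}}$ similarly yields prox-regularity of ${\cal S}$ there. The usual epigraphical description of normals, $(v,-1)\in N_{\cal S}({\h x},f({\h x}))\Leftrightarrow v\in\partial f({\h x})$, then translates the hypothesis $0\in{\rm ri}\,\partial f({\bar{\h x}})$ into $(0,-1)\in{\rm ri}\,N_{\cal S}({\bar{\h x}},f({\bar{\h x}}))$.

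Next I would apply Theorem S.\ref{identifycon} to the smooth function $F({\h x},r):=r$ on ${\mathbb R}^n\times{\mathbb R}$, the set ${\cal S}$, the base point $({\bar{\h x}},f({\bar{\h x}}))$, and the sequence ${\h y}_k:=({\h x}_k,f({\h x}_k))$. The joint assumption ${\h x}_k\to{\bar{\h x}}$ and $f({\h x}_k)\to f({\bar{\h x}})$ is exactly what delivers ${\h y}_k\to({\bar{\h x}},f({\bar{\h x}}))$; since $-\nabla F\equiv(0,-1)$, Theorem S.\ref{identifycon} then gives
$${\h y}_k\in\widetilde{\cal M}\;\mbox{for all large}\;k\;\Longleftrightarrow\;{\rm dist}\bigl((0,-1),N_{\cal S}({\h y}_k)\bigr)\to 0.$$

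Finally I would unwind both sides. The left-hand side is literally ${\h x}_k\in{\cal M}$. The step I expect to be most delicate is showing that ${\rm dist}((0,-1),N_{\cal S}({\h x}_k,f({\h x}_k)))\to 0$ is equivalent to ${\rm dist}(0,\partial f({\h x}_k))\to 0$. One direction is immediate: any $v_k\in\partial f({\h x}_k)$ with $\|v_k\|\to 0$ gives $(v_k,-1)\in N_{\cal S}({\h y}_k)$ within $\|v_k\|$ of $(0,-1)$. For the converse I would exploit the cone structure $N_{\cal S}({\h y}_k)=\{t(v,-1):t\ge 0,\,v\in\partial f({\h x}_k)\}\cup\{(w,0):w\in\partial^{\infty}f({\h x}_k)\}$ and argue that a cone element close to $(0,-1)$ must be of the form $(v_k,-1)$ with $v_k$ near $0$, since the horizontal singular ray $\{(w,0)\}$ is bounded away from $(0,-1)$ by at least $1$. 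This elimination of the horizontal contribution uses the local convexity of $N_{\cal S}$ supplied by prox-regularity together with the subgradient-continuity clause in Definition S.\ref{def2.3}, both of which are already in force. With this translation in hand, the theorem follows from the epigraphical equivalence above.
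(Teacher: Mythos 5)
Your proposal is correct and follows essentially the same route as the paper: lift $f$ to its epigraph ${\cal S}={\rm epi}f$ with the graph manifold of ${\cal M}$, verify partial smoothness and prox-regularity of ${\cal S}$ at $({\bar{\h x}},f({\bar{\h x}}))$, and apply Theorem S.\ref{identifycon} to the linear function $({\h x},r)\mapsto r$, translating ${\text{\rm dist}}(({\bf 0},-1),N_{\cal S}({\h x}_k,f({\h x}_k)))\to 0$ into ${\text{\rm dist}}({\bf 0},\partial f({\h x}_k))\to 0$. The only cosmetic difference is that the paper derives the partial smoothness of the epigraph explicitly via the chain, sum, and level-set rules of \cite{Lewis02} rather than citing it, while you supply more detail than the paper does on the final normal-cone-to-subdifferential translation.
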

\begin{proof} See the proof to Theorem S.7 in supplementary materials. \end{proof}
Based on this, we show the finite identification property of (\ref{ADMMschI}).
\begin{theorem}
\label{thmidentify} 
	Let  $\{{\h w}^k\}$ be the sequence generated by  (\ref{ADMMschI}).
Suppose that ${A^\top {\h b}}\not\in {\cal X}^o$, $\beta>2L$ and $\{{\h x}^k\}$ is bounded.
Then, the sequence $\{{\h w}^k\}$ converges to ${\h w}^\infty$.
Suppose that $\Phi({\h x})$ is  ${\cal C}^1$ and prox-regular at ${\h x}^\infty$.
If $0\in{\text{\rm ri}}(\partial (F_u(\cdot)+ \iota_{\cal X}(\cdot))({\h x}^\infty))$,
$${\h x}_k\in{\cal M}_{{\h x}^\infty},\;\;\mbox{for all large}\;k.$$
\end{theorem}
\begin{proof}See appendix \ref{AppC}. \end{proof}

To this end, we further exploit the condition of $$0\in{\text{\rm ri}}(\partial (F_u(\cdot)+ \iota_{\cal X}(\cdot))({\h x}^\infty)).$$
We can characterize it by dividing it into two cases:

(a) When ${\cal X}={\mathbb R}^n$,
\begin{eqnarray*}
&&0\in{\text{\rm ri}}(\partial (F_u(\cdot)+ \iota_{\cal X}(\cdot))({\h x}^\infty) \nn\\[0.1cm]
 &&\Leftrightarrow  \gamma\left(\frac{{\text{sign}}(({\h x}^\infty)_{\Lambda})}{r}-\frac{a}{r^3}({{\h x}^\infty})_{\Lambda}\right)+({\h q}^\infty)_{\Lambda}=0,  \nn\\
 &&\mbox{and}\;\|{({\h q}^\infty)}_{{\Lambda}^c}\|_{\infty}<\gamma/r,
\end{eqnarray*}
where ${\h q}^\infty=\nabla \Phi({\h x}^\infty)$, $a=\|{\h x}^\infty\|_1$, and $r=\|{\h x}^\infty\|_2$.

(b) When ${\cal X}={\mathbb R}_+^n$,
\begin{eqnarray*}
&&0\in{\text{\rm ri}}(\partial (F_u(\cdot)+ \iota_{\cal X}(\cdot))({\h x}^\infty)\nn\\[0.1cm]
&&\Leftrightarrow  \gamma\left(\frac{{\bf e}}{r}-\frac{a}{r^3}({{\h x}^\infty})_{\Lambda}\right)+({\h q}^\infty)_{\Lambda}=0,\nn\\
 &&\mbox{and}\;{({\h q}^\infty)}_{{\Lambda}^c}+\frac{\gamma}{r}>0,
\end{eqnarray*}
where ${\h q}^\infty$, $a$, and $r$ are defined the same as in Case (a).
\begin{remark}
Thus, we establish the identification property of both ADMM$_p$ and ADMM$_p^+$. This property implies the existence of an index number, denoted as $K$, such that the sequence $\{{\h x}^k\}$ generated by either ADMM$_p$ or ADMM$_p^+$ will eventually enter the active manifold. This serves as the theoretical basis for designing an acceleration framework provided in Section \ref{HAF}.

\end{remark}

\subsection{Heuristic Acceleration Framework}\label{HAF}
 As proven in Theorem \ref{thmidentify}, both ADMM$_p$ and ADMM$_p^+$ can successfully identify the active manifold within a finite number of iterations. To leverage this result, we develop a heuristic acceleration framework for the active manifold (abbr. HAFAM).
This heuristic framework comprises two phases:
(I) Utilize ADMM$_p$ or ADMM$_p^+$ to identify the active manifold;
(II) Employ a globalized semismooth Newton method (abbr.  {\bf SSNewton}) to solve the reduced low-dimensional minimization problem.

The primary advantage of this approach lies in the dimension of the identified manifold, which is typically much smaller than the original. For implementation details of HAFAM, refer to Algorithm \ref{alg:ACT}. Additionally, for Phase II in HAFAM, consult Algorithm \ref{ALG_NCG}.

To determine the transition from Phase I to Phase II, we utilize a heuristic strategy, ensuring the support set remains unchanged for consecutive $T$ iterations. Mathematically,
\begin{eqnarray}\label{adjLam}\Lambda_k = \Lambda_{k+1} = \cdots = \Lambda_{k+T}, \quad (\Lambda_k = {\text{supp}}({\h x}^{(k)})). \end{eqnarray}
To elucidate Phase II, let ${\h x}^{I}$ be generated by ADMM$_p$ or ADMM$_p^+$ (\ref{ADMMschI}) such that (\ref{adjLam}) is satisfied. Denote ${\widehat\Lambda}={\text{supp}}({\widehat{\h x}})$, and define
\begin{eqnarray}\label{Mstar} \widehat{\cal M} = \{ {\h x}\in{\mathbb R}^n \;|\; {\text{supp}}({\h x})={\widehat\Lambda}\}. \end{eqnarray}
Below, we focus on ${\cal X}={\mathbb R}^n$ for brevity.

Without loss of generality, we assume $$({\h x})^\top=\left(({\h x}_{\widehat{\Lambda}})^\top,({\bf 0})^\top\right)$$ for any ${\h x}\in\widehat{\cal M}$.
Therefore, we can characterize
the objective function of (\ref{L1o2uncon}) as $\varphi: {\mathbb R}^s\rightarrow
{\mathbb R}$ (let ${\h u}={\h x}_{\widehat\Lambda}$, $s={\text{dim}}({\h u})$),
\begin{eqnarray}\label{varphi} \varphi({\h u}) =  F_u\left(\begin{array}{c}{\h u}\\{\bf 0}\end{array}\right). \end{eqnarray}
Thus, the original problem (\ref{L1o2uncon}) is transformed into an equivalent problem within a lower-dimensional space:
\begin{eqnarray}\label{reducedim}\min_{{\h u}\in{\mathbb R}^s}  \varphi({\h u}).\end{eqnarray}
 Phase II aims to solve the problem (\ref{reducedim}).

\begin{algorithm}[tt]
	\caption{Heuristic Acceleration Framework on Active Manifold (HAFAM)}
	\label{alg:ACT}
	\begin{algorithmic}[1]
		\Require{$A\in{\mathbb R}^{m\times n}$, $\h b\in{\mathbb R}^m$, ${\h x}_0\in\mathbb R^n$, $\beta>0,\;\tau\ge 0,\;\gamma>0, \;T\in{\mathbb N}$. }
        \State {\bf Phase I: Identify the active manifold by ADMM$_p$ (or ADMM$_p^+$).}
        \State Initialize: $\h y^0=\h z^0={\h x}_0$.
		\While{(\ref{adjLam}) not satisfied.}
		\State Solve $\h x^{k+1}$ via (\ref{xsub-ADMMI}).
		\State Compute $\h y^{k+1}$ via  (\ref{ysub-ADMMI}).
		\State Update $\h z^{k+1}$ via (\ref{zsub-ADMMI}).
		\EndWhile
        \State Output ${\h x}^{I}$.
        \State Compute {${\widehat{\h x}} ={\text{HARD}}({\h x}^{I},\tau)$}.
        \State Let $\widehat{\cal M}$ be defined in (\ref{Mstar}).
        \State {\bf Phase II:  Globalized semismooth Newton method.}
        \State Use Algorithm \ref{ALG_NCG} to solve (\ref{reducedim}), and
        \State  let ${\h u}$={\bf SSNewton}$({\widehat{\h x}},\widehat{\cal M})$.
         \State Output $\overline{\h x}$ such that ${\overline{\h x}}|_{\widehat{\cal M}}={\h u}$ and $\overline{\h x}|_{{\widehat{\cal M}}^c}={\bf 0}$.
	\end{algorithmic}
\end{algorithm}

If $\Phi(\cdot)$ is LC$^1$-function,  we calculate
\begin{eqnarray}\label{varphigrad} \nabla \varphi({\h u})=\gamma\left(\frac{\sign{{\h u}}}{\|{\h u}\|_2}-\frac{\|{\h u}\|_1}{\|{\h u}\|_2^3}{\h u}\right)+\nabla_{\h u} \Phi\left(\begin{array}{c}{\h u}\\{\bf 0}\end{array}\right),\end{eqnarray}
\noindent and the generalized Hessian of $\varphi$
\begin{eqnarray}\label{LocCond}&&\partial^2 \varphi({\h u})\! =\!\{{\cal V}\,|\,{\cal V}=V_{\widehat\Lambda,\widehat\Lambda}\nn\\
&&-\gamma\frac{1}{r^2}\left( \frac{{\h u}({ \sign(\h u)})^\top +{ \sign(\h u)}{\h u}^\top}{r}\right)+\gamma\frac{3a{\h u}{\h u}^\top}{r^5}-\gamma\frac{a}{r^3}\},\nn\\
  \end{eqnarray}
  where $V\in \partial^2 \Phi({\h x})$,
  and $a=\|{\h u}\|_1$ and $r=\|{\h u}\|_2$.
  By defining $$Q=\frac{\gamma}{r^3}\left[\left({\h u}({ \sign(\h u)})^\top +{ \sign(\h u)}{\h u}^\top\right)-\frac{3a{\h u}{\h u}^\top}{r^2}+{a}I\right],$$
  we can show that all the eigenvalues of $Q$ are as follows:
  \begin{eqnarray*}\lambda_{1}=\frac{\gamma(a- \sqrt{4s r^2 -3a^2})}{2r^3},\;\lambda_2=\frac{\gamma(a+ \sqrt{4s r^2 -3a^2}) }{2r^3},\end{eqnarray*}
 accompanied  with $0$ eigenvalue in the multiplicity of $s-2$.
Obviously, $\lambda_1<0$ and $\lambda_2>0$.

With these preparatory results, the globally semismooth Newton method can be applied to solve  (\ref{reducedim}), and
more details can be found in Algorithm \ref{ALG_NCG}.

\begin{algorithm}[t]
	\caption{\;\;{\bf SSNewton}$({\h x}^0,{\cal M})$}\label{ALG_NCG}
	\begin{algorithmic}[1]
			\State Initialization: $\h u^{0}={\h x}^0|_{\cal M}\in \mathbb{R}^{m},\, \mu\in (0,\, 1/2),\, {\eta}, \nu\in (0,\, 1)$ and $\delta\in (0,\, 1)$. Define $\varphi$ and $\nabla \varphi$ by (\ref{varphi}), (\ref{varphigrad}), respectively.
			\While{``not converge"}
            \State{Let $V_j\in\partial^2\varphi({\h u}^j)$. Find  an approximate solution ${\h d}^j$} to
             \begin{eqnarray}\label{newton} V_{j}\h{d}^j=-\nabla \varphi (\h{u}^{j}), \end{eqnarray}
            \State{such that}
            \begin{eqnarray}\label{vj}\|\nabla \varphi({\h u}^j)+V_j {\h d}^j\|_2\le \eta_j\|\nabla\varphi({\h u}^j)\|_2,\end{eqnarray}
            \State{where $\eta_j=\min\{\eta,\|\nabla\varphi({\h u}^j)\|_2\}$.}
            \If {(\ref{vj}) is unachievable or $\langle\nabla \varphi({\h x}^j),{\h d}^j \rangle\!>\!-\nu_j\|{\h d}^j\|_2^2$}
            \State{$\quad$${\h d}^j=-B_j^{-1}\nabla \varphi({\h d}^j)$.}
             \EndIf
          \State{where $\nu_j=\min\{\nu, \|\varphi({\h u}^j)\|_2\}$ and $B_{j}\succ{\bf 0}$.}
                  \For{$m=0,\, 1,\, 2,\, \cdots$}
			\State {Set $\alpha_{j}=\delta^{m}$,
                    \State If $\alpha_{j}$ satisfies
					\State
						$\varphi (\h{u}^{j}+\alpha_{j}\h{d}^{j}) \leq\, \varphi (\h{u}^{j}) +\mu \alpha_{j}\langle \nabla \varphi (\h{u}^{j}),\, \h{d}^{j}\rangle.$}
\State Set $\h{u}^{j+1}=\h{u}^{j}+\alpha_{j}\h{d}^{j}.$
               \EndFor
			\EndWhile
\State Output ${\h u}$.
	\end{algorithmic}
\end{algorithm}

Except the two phases in Algorithm \ref{alg:ACT}, there is  an additional hard-shrinkage step, and the operator ${\text{HARD}}$ is defined as:
\begin{eqnarray*}
{\text{\rm HARD}}({\h x},\tau) = \max(|{\h x}| - \tau, 0) .* {\h x},
\end{eqnarray*}
where $.*$ denotes componentwise multiplication.
 This step is to address the recovery under noisy data.
More specifically, we set $ \tau = 0 $ when the data is noiseless and $ \tau > 0 $ when the observation is noisy.

\begin{remark} In Algorithm \ref{ALG_NCG}, we practically use the conjugate gradient (CG) method to solve the following
perturbed Newton equation:
\begin{eqnarray*}(V_{j}+\varepsilon_k I)\h{d}^j+\nabla \varphi (\h{u}^{j})=0, \end{eqnarray*}
where $\varepsilon_k =\|\varphi (\h{u}^{j})\|_2$.
\end{remark}

\begin{remark} In Algorithm \ref{ALG_NCG}, the matrix $B_j$ is any real symmetric positive definite matrix for all $j$.
\end{remark}
\subsection{High-Order Convergence Guarantees}
We establish the proposed approach's eventual superlinear/quadratic convergence rate under the assumption that the objective function possesses a smooth representation that is semismooth/strongly semismooth, along with some additional conditions.  The analysis of the following theorem is inspired by \cite[Theorem 5.3]{QiSun} and \cite{ZhaoSunToh}. For the sake of completeness, we have included the proof here.
\begin{theorem}\label{sublim}
Suppose the assumptions in Theorem \ref{thmidentify} are fulfilled and $\Phi(\h x)$ is $LC^{1}$ and bounded below.
Both $\{\|B_j\|_2\}$ and $\{\|B_j^{-1}\|_2\}$ are uniformly bounded.
Let $\{{\h u}^j\}$ be generated by Algorithm \ref{ALG_NCG} with ${\cal M}={\cal M}_{{\h x}^\infty}$.
\begin{itemize}
 \item[(i)] Any accumulation point ${\hat{\h u}}$ of
$\{{\h u}^j\}$
 is a critical point of (\ref{reducedim}). 
\item[(ii)] Furthermore, assume that $\varphi$ is level bounded on ${\cal M}_{{\h x}^\infty}$.
If $\nabla \varphi$ is semismooth at ${\hat{\h u}}$ and for all ${\hat V}\in\partial^2 \varphi({\hat{\h u}})$ are positive definite, then
the whole sequence $\{{\h u}^j\}$ converges to ${\hat{\h u}}$ superlinearly; if $\nabla \varphi$ is strongly semismooth at ${\hat{\h x}}$, then
the sequence $\{{\h  u}^j\}$ converges to ${\hat{\h u}}$ quadratically.
\item[(iii)]
By defining ${\widehat{\h x}}|_{\widehat\Lambda} = {\hat{\h u}}$ and ${\widehat{\h x}}|_{{\widehat\Lambda}^c} = {\bf 0}$,
${\widehat {\h x}}$ recovers a critical point of (\ref{L1o2uncon}).
\end{itemize}
\end{theorem}
\begin{proof} See appendix \ref{AppD}. \end{proof}
\begin{remark} Suppose $V_{\widehat\Lambda,\widehat\Lambda}\succ{\bf 0}$ defined in (\ref{LocCond}) with ${\h u}={\hat{\h u}}$.
There exists a scalar $\bar\gamma>0$ (e.g., ${\bar\gamma}=\frac{2\|{\hat{\h u}}\|_2^2\lambda_{\min}(V_{\widehat\Lambda,\widehat\Lambda})}{3\sqrt{\|{\hat{\h u}}\|_0}}$) such that when $0<\gamma<{\bar\gamma}$,  then
   all the ${\hat V}\in\partial^2 \varphi({\hat{\h u}})$ are positive definite.
\end{remark}

\section{Numerical results}\label{NumRes}
We present numerical results to illustrate the efficiency of the proposed algorithmic framework
(i.e., Algorithm \ref{alg:ACT}) and its ability to achieve high-order convergence rates and high-accuracy solutions.
The efficiency of ADMM$_p$/ADMM$_p^+$ for arbitrary/nonnegative sparse recovery has been
well demonstrated in \cite{Tao20,TaoZhang23} in comparison with the existing state-of-the-art methods.
Below, we focus on the comparisons between Algorithm \ref{alg:ACT} and ADMM$_p$.
All these algorithms are implemented on MATLAB R2016a, and performed on a desktop with Windows 10 and an Intel Core i7-7600U CPU processor (2.80GH) with 16GB memory.
The stopping criterion of ADMM$_p$ we adopt relative error (RelErr) less  than a threshold:
\begin{eqnarray}\label{StopC}
&&{\text{RelErr}}=\frac{||\h x^{k-1}-\h x^{k}||_2}{\max\{10^{-16},||\h x^{k}||_2,||\h x^{k-1}||_2\}} < 10^{-8} \nn\\
&&\mbox{ or}\; k_{\max }>{\rm IMax}.
 \end{eqnarray}

Let ${\rm IMax}=2000$.
Set $T=5,10,20,30$ in Algorithm \ref{alg:ACT}, denoted as HAFAM$_T$.
The setting of parameter $\beta$ in HAFAM$_T$ is the same as ADMM$_p$.
For {\bf SSNewton}$({\widehat{\h x}},\widehat{\cal M})$, we take
${\eta}=1e-3$, $\nu=1e-8$, $\mu=1e-8$, $\delta=0.95$ and $B_j=0.1I$
for all $j$. We used
$$\|\nabla \varphi({\h u}^j)\|_2\le 1e-11\;\;\mbox{or}\;\;j> {\text{SSNMax}}$$
as the termination criterion
and  ${\text{SSNMax}}= 2500$.

To measure the similarity degree between the support sets of two vectors, we introduce the {\it identification accuracy} (IAcc) function.  Given two vectors ${\h x}^{(1)}$, ${\h x}^{(2)}\in \mathbb R^n$, we define
\begin{eqnarray*}
	\text{IAcc}(\h x^{(1)},\h x^{(2)}) = \frac{1}{n}\sum_{i = 1}^{n}\delta(\h x^{(1)}_{i},\h x^{(2)}_{i})
\end{eqnarray*}
where $\delta(\h x^{(1)}_{i},\h x^{(2)}_{i}) = 1$ if both $\h x^{(1)}_{i}$ and $\h x^{(2)}_{i}$ are either zero or nonzero; otherwise, $\delta(\h x^{(1)}_i,\h x^{(2)}_{i}) = 0$. Obviously, $\text{IAcc}(\h x^{(1)},\h x^{(2)})=\text{IAcc}(\h x^{(2)},\h x^{(1)})$ and $0 \leqslant \text{IAcc}(\h x^{(1)},\h x^{(2)})\leqslant 1$. A larger  IAcc value signifies higher accuracy in the identification. Clearly, $\text{IAcc}(\h x^{(1)},\h x^{(2)})=1$ if and only if ${\text{supp}}({\h x}^{(1)})={\text{supp}}({\h x}^{(2)})$.

To measure the extent of satisfying optimality condition of (\ref{L1o2uncon}), we define the Karush-Kuhn-Tucker residual on  the  support set (KKT$_R$) of the  last iterate ${{\h x}}$ as:
 \begin{eqnarray*} \text{KKT$_R$} = \left\| \gamma\left(\frac{{\text{sign}}({{\h x}}_{{\Lambda}})}{\|{{\h x}}\|_2}-\frac{\|{{\h x}}\|_1}{\|{{\h x}}\|_2^3}{{\h x}}_{{\Lambda}}\right)+(A_{{\Lambda}})^\top(A{{\h x}}-{\h b})\right\|_2,   \end{eqnarray*}
where ${\Lambda}=\operatorname{supp}({\h {x}})$.

\subsection{Sparse Recovery} \label{SR} We focus on the sparse recovery
by solving (\ref{L1o2uncon}) with $\Phi({\h x})=\frac{1}{2}\|A{\h x}-{\h b}\|_2^2$ and $A \in \mathbb{R}^{m \times n}\;(m \ll n)$, ${\h b}\in\mathbb R^m$.
Two types of sensing matrices are considered:
 (I) Gaussian matrix.  $A$ is subject to ${\cal N}({\bf 0}, \Sigma)$  with
	$\Sigma=((1-r)I_n+r)$
	with $r\in(0,1)$.
(II)  Oversampled DCT (O-DCT).  $A = [{\bf a}_1,{\bf a}_2,\ldots,{\bf a}_n]\in {\mathbb R}^{m\times n}$
	with each column ${\bf a}_j=\frac{1}{\sqrt{m}}\cos \left( \frac{2\pi {\bf w}j}{F}\right)(j=1,\ldots,n),$
	where ${\bf w}\in{\mathbb R}^m$ is an uniformly distribution on $[0,1]$ random vector  and
	$F\in {\mathbb R}_+$ controls the coherence.
We generate an $s$-sparse ground truth signal ${\h x}^*\in{\mathbb R}^n$  with
 dynamic range (${\tt sign(randn(K,1)).*10^D.*randn(K,1)}$ in matlab script) and ${\tt D}$
 is specified later.

First, we test two types of matrices (Gaussian matrix, O-DCT) with the ground-truth having sparsity $s=12$, where nonzero entries are generated by dynamic range with ${\tt D}=1$. We generate ${\h b}= A{\h x}^*$. The size of the sensing matrix $(m,n)$ is $256\times 2048$. We set $\gamma=0.0001$ in (\ref{L1o2uncon}) and $\beta=0.015$ in ADMM$_p$.

In Fig. \ref{smooth}, we depict the evolution of RErr $\left(\frac{\|{\h x}^k-{\h x}^{*}\|_2}{\|{\h x}^{*}\|_2}\right)$ and KKT$_R$ over iterations for ADMM$_p$ and HAFAM$_T$ with different values of $T$ (i.e., $T=5,10,20,30$). The results are presented separately for the cases of Gaussian matrices (top row) and O-DCT matrices (bottom row).
Notably, it is evident that the convergence of HAFAM$_T$ (particularly with $T=5$) is significantly faster compared to ADMM$_p$. The RErr and KKT$_R$ values consistently exhibit a linear reduction, leading to the attainment of the lowest values. This accelerated convergence process stands out as a significant advantage of HAFAM$_T$.

In Table \ref{Table1.2}, we present the outcomes of ADMM$_p$ (AD$_p$) and HAFAM$_T$ (HA$_T$) concerning the Gaussian matrix scenario in terms of  RErr, computational time in seconds (CPU), final objective function value (Obj), KKT$_R$, the iteration number of transferring to {\bf SSNewton} (TranIt), total iteration number (ToIt), $\text{IAcc}({\h x}^{I},\check{{{\h x}}})$ (IAcc1).
Here, ${\h x}^{I}$ represents the iterate from HAFAM$_T$ when the heuristic strategy (\ref{adjLam}) is satisfied. $\check{{{\h x}}}$ corresponds to the last iterate from ADMM$_p$ until the stopping criterion (\ref{StopC}) is satisfied. The data in Table \ref{Table1.2} illustrates that HAFAM$_T$ identifies the active manifold (i.e., the true support set) within a finite number of iterations for all tested $T$, as indicated by the values of IAcc1 being all $1$. Notably, ADMM$_p$ also achieves this within a finite iteration, as the first phase in HAFAM$_T$ is essentially ADMM$_p$.
\begin{figure}[t]
	\vspace{0cm}\centering{\vspace{0cm}
		\begin{tabular}{cc}
		\includegraphics[scale = .32]{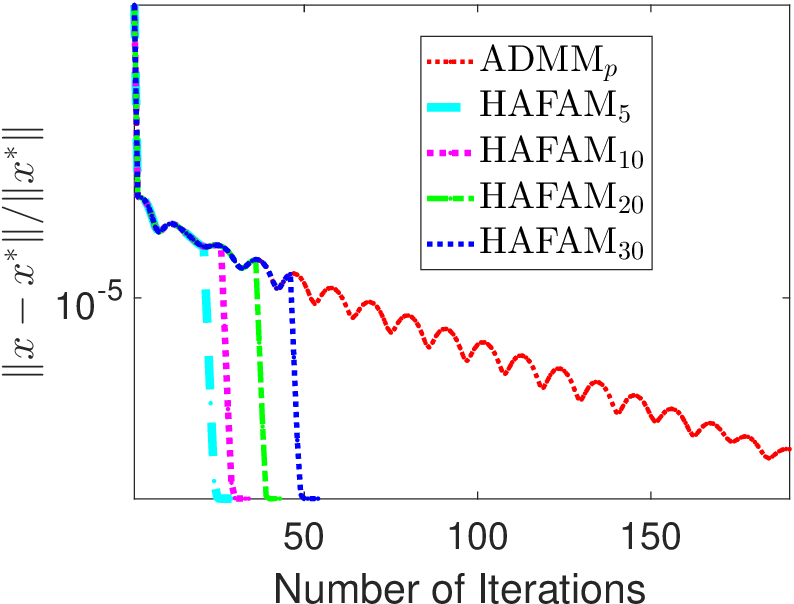}\!\!&\!\!  \includegraphics[scale = .32]{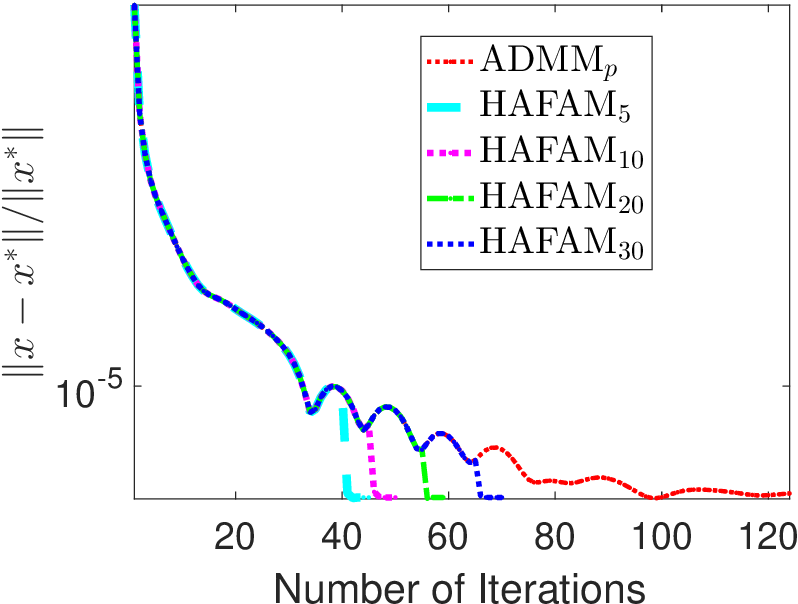} \\
\includegraphics[scale = .32]{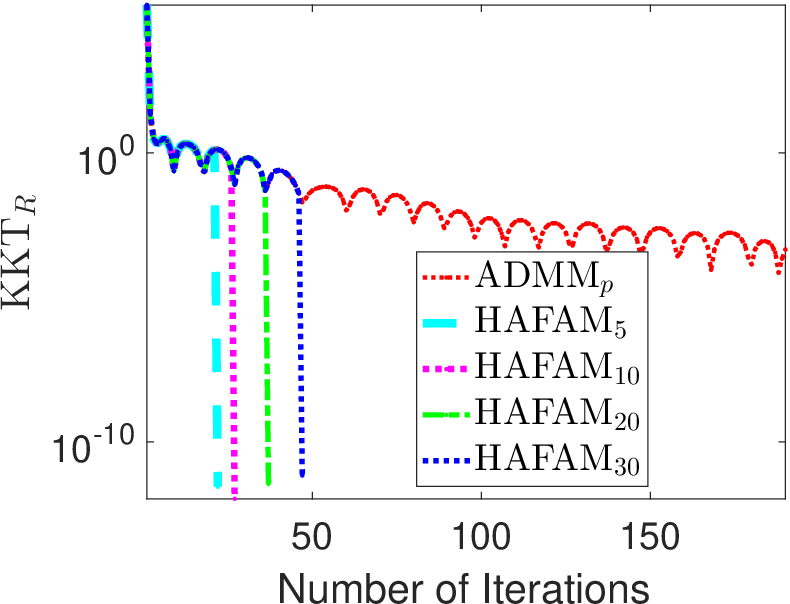}\!\!&\!\!\includegraphics[scale = .32]{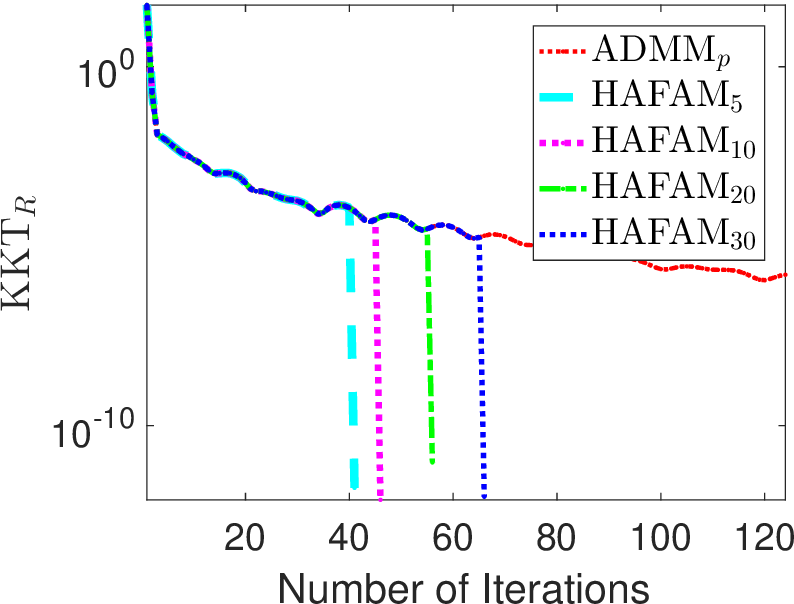}\\
		\end{tabular}
	} \caption{
Comparison between ADMM$_p$ and HAFAM$_T$ ($T=5,10,20,30$)  for Gaussian matrices with $r=0.8$ (top row) and O-DCT matrices with $F=10$ (bottom row). The evolution of RErr and KKT$_R$ from HAFAM$_T$ consistently exhibits a linear reduction when the active manifold is identified, highlighting its accelerated convergence over ADMM$_p$.}
	\label{smooth}\end{figure}

 \begin{table}[tt]
	\begin{center}
		\caption{Results on ADMM$_p$ and HAFAM$_T$}\label{Table1.2}
		\vspace{-0.2cm}\begin{tabular}{c|ccccc}
			\hline
{Algo.} \!\!&\!\!  AD$_p$  & HA$_5$  & HA$_{10}$  & HA$_{20}$    & HA$_{30}$ \\
			\hline
           RErr  &4.45e-08&7.59e-09&7.59e-09&7.59e-09&7.59e-09\\

           CPU &3.58e-01&5.27e-02&5.71e-02&9.19e-02&9.68e-02\\

            Obj  &2.99e-04&2.99e-04&2.99e-04&2.99e-04&2.99e-04\\

            KKT$_R$  &4.55e-04&3.09e-12&1.07e-12&3.39e-12&7.08e-12\\

            TranIt   & ----       &21      &26      &36      &46\\

            ToIt       & 190    &30      &35      &44      &55\\

            IAcc1    & 1     & 1       & 1       &  1      & 1\\

            \hline
		\end{tabular}
	\end{center}
\end{table}

In the second part, we perform a performance profiles \cite{DM02} for ADMM$_p$ and HAFAM$_T$ ($T=5,10,20,30$) in terms of KKT$_R$ and CPU. The analysis involves {\it $90$ different problems} with Gaussian matrices ($r=0.7,0.8,0.9$) and O-DCT matrices ($F=5,10,15$), varying sparsity ($s=4:2:32$), and dimensions $(m,n)=(64,1024)$. Nonzero entries of the ground truth are generated with dynamic range using ${\tt D}=2$.

Let $t_{p,j}$ denote a performance metric (lower are preferable) for the $j$th solver on problem $p$.
 We calculate the ratio $r_{p,j}$ by dividing $t_{p,j}$ by the smallest value achieved by any of the $j$ solvers for problem $p$, i.e.,
$r_{p,j}=\frac{t_{p,j} }{\min\{t_{p,j}:1\le j\le n_j\}}$.
For a given threshold $\tau>0$, $\pi_j(\tau)$ is  the ratio of the number of problems where $r_{p,j}\le\tau$ to the total number.
 This provides insight into whether solver $j$ performs within a factor of $\tau$ in comparison to the best-performing solver.

 Fig. \ref{performance}  presents the performance profiles of KKT$_R$ and CPU.
It shows that   KKT$_R$  of HAFAM$_{30}$ is better than the others on all the problems while
CPU time of HAFAM$_{5}$ are less than the others on the most problems.
A similar phenomenon is also observed for RErr, we omit the corresponding plot  for conciseness.
 \begin{figure}[t]
	\vspace{0cm}\centering{\vspace{0cm}
		\begin{tabular}{cc}
		\includegraphics[scale = .32]{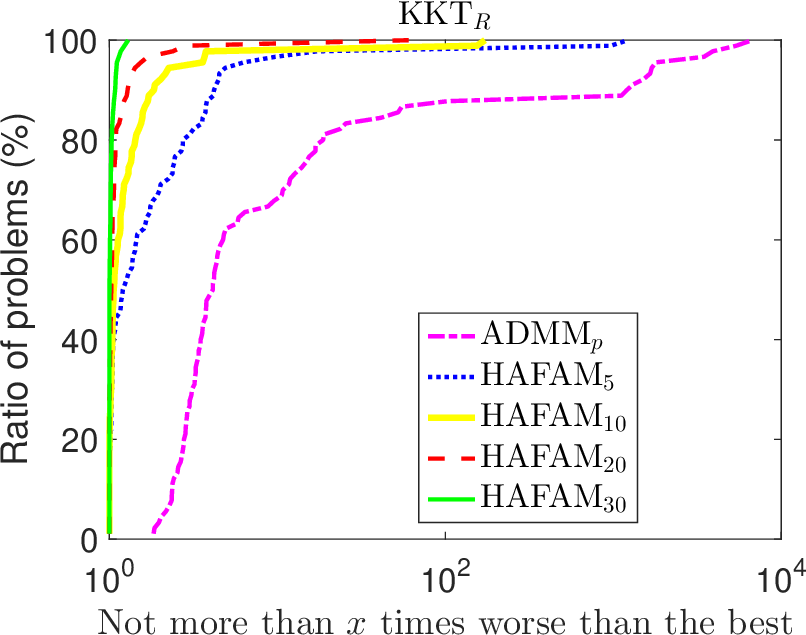}\!\!& \!\!\includegraphics[scale = .32]{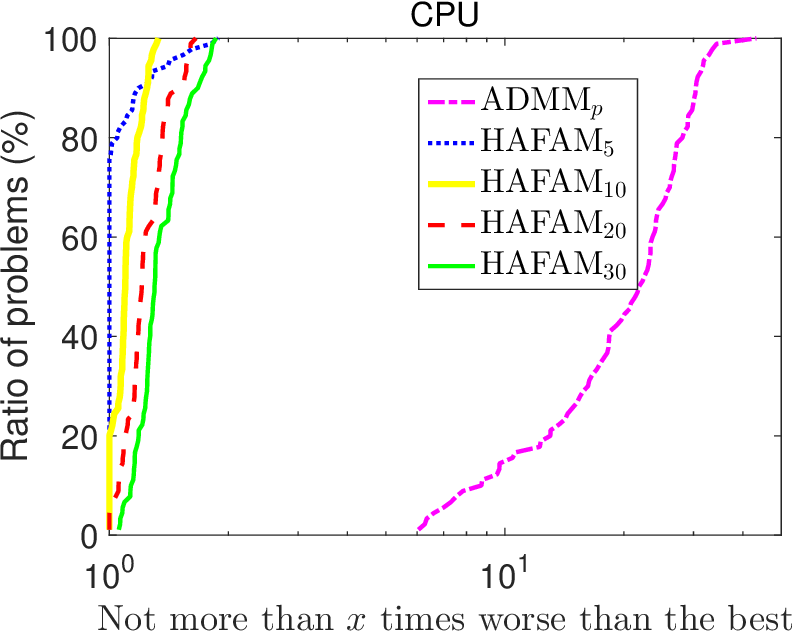} \\
		\end{tabular}
	} \caption{The performance profiles of ADMM$_p$ and HAFAM$_T$ ($T=5,10,20,30$) for {\it $90$ different problems}.
HAFAM$_{30}$ excels in KKT$_R$ performance among these comparing algorithms, while HAFAM$_{5}$ takes the least CPU time.
}
	\label{performance}\end{figure}

 \begin{figure}[t]
	\vspace{0cm}\centering{\vspace{0cm}
		\begin{tabular}{cc}
		\includegraphics[scale = .32]{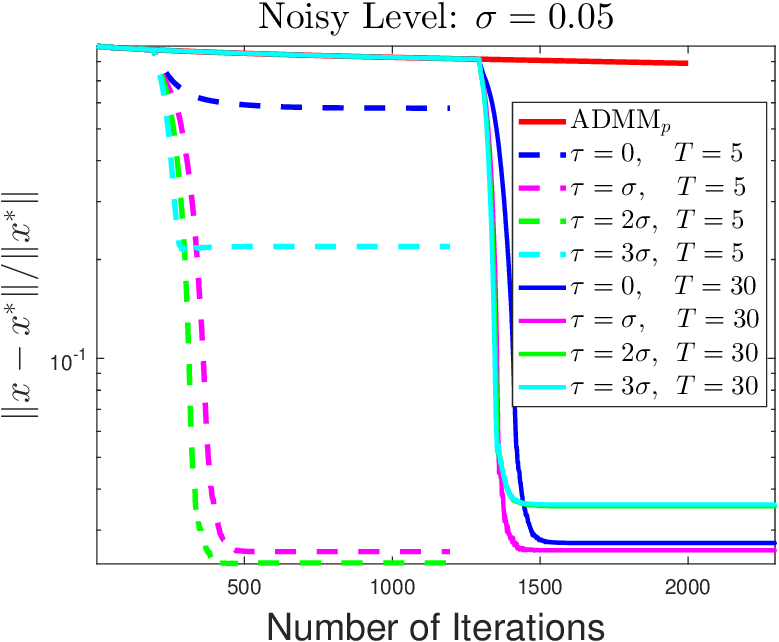}\!\!& \!\!\includegraphics[scale = .32]{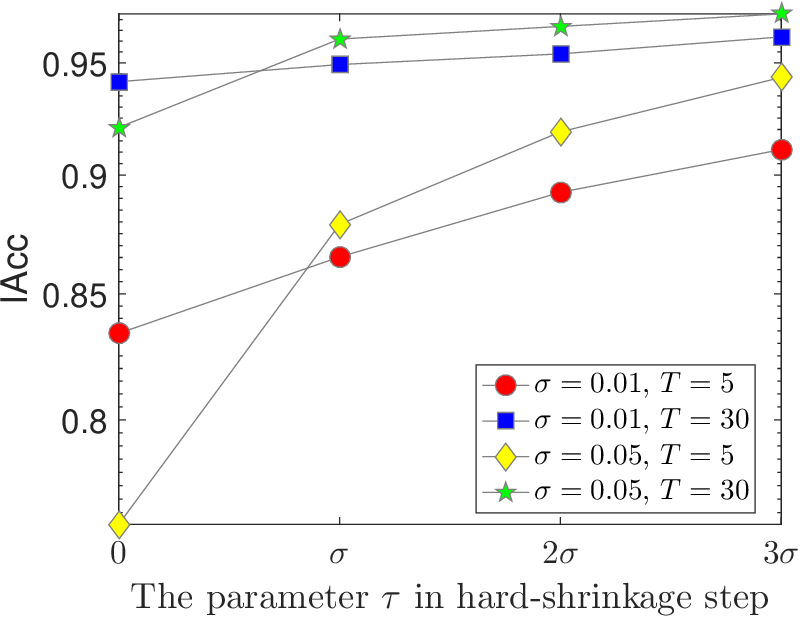} \\
		\end{tabular}
	} \caption{The evolution of the RErr from ADMM$_p$ and HAFAM$_T$ ($T = 5,\;30$) with varying $\tau$ values under a noise level of $\sigma = 0.05$ (left) and the $\text{IAcc}({\h x}^{I},{{{\h x}^*}})$ values for different $\tau$ settings under two noise levels (right). Specifically, the values of $\text{IAcc}({\h x}^{I},{{{\h x}^*}})$ for $\tau = 0,\sigma,2\sigma,3\sigma$ are provided: (a) [$\sigma = 0.01, T = 5$]: 0.83, 0.87, 0.89, 0.91; (b) [$\sigma = 0.01, T = 30$]: 0.94, 0.95, 0.95, 0.96; (c) [$\sigma= 0.05, T = 5$]: 0.76, 0.88, 0.92, 0.94; (d) [$\sigma = 0.05, T = 30$]: 0.92, 0.96, 0.97, 0.97.
}
	\label{HardS}\end{figure}


In summary, Phase I of Algorithm \ref{alg:ACT} is dedicated to identifying the active manifold, and Phase II focuses on accelerating the convergence rate within the dimension-reduced space. The introduction of the hard-shrinkage step (Line 9) in Algorithm \ref{alg:ACT} is specifically to address the noisy data.

To empirically substantiate this, we conduct a case study for the model (\ref{L1o2uncon}) with $\Phi({\h x})=\frac{1}{2}\|A{\h x}-{\h b}\|_2^2$. The matrix $A \in \mathbb{R}^{64 \times 1024}$ is O-DCT, and the ground truth ${\h x}^*\in{\mathbb R}^{1024}$ has sparsity $s=6$ and a dynamic range ${\tt D}=1$. We generate ${\h b}=A{\h x}^*+\varepsilon$, where the noise $\varepsilon$ follows a normal distribution $N({\bf 0},\sigma)$, with two noise levels: (a) $\sigma=0.01$ and (b) $\sigma=0.05$.

The evolution of RErr over iterations is provided in Fig. \ref{HardS} (left plot) for the case $\sigma=0.05$. We present results for ADMM$_p$ and HAFAM$_T$ with $T=5,30$, both with and without the hard-shrinkage step, corresponding to $\tau=\sigma,2\sigma,3\sigma$, and $\tau=0$. Notably, the inclusion of the hard-shrinkage step in HAFAM$_5$ at $\tau=2\sigma$ outperforms the others, demonstrating the lowest RErr. As $T=30$, the performance gap between with and without the hard-shrinkage step diminishes compared to $T=5$.

Additionally, we compute the value of IAcc(${\h x}^{I},{{{\h x}^*}})$, where ${\h x}^{I}$ represents the iterate from HAFAM$_T$ satisfying (\ref{adjLam}), and ${\h x}^*$ is the ground truth. The results are plotted in Fig. \ref{HardS} (right plot). We observe an increase in the accuracy of identifying the true support set when incorporating the hard-shrinkage step under both noisy levels.

\subsection{Validation of finite identification} \label{ValFI}
We conduct the numerical validations for the finite identification property of ADMM$_p$ and focus on  (\ref{L1o2uncon}) with $\Phi(\h x)=\frac{1}{2}\|A{\h x}-{\h b}\|_2^2$.

We test on Gaussian matrix $A$
   with
$r=0.8$. The dimensions of the sensing matrix
$(m,n)$ are fixed at $n=1024$, with  $m=16:16:256$.
We generate an
$s$-sparse ground truth signal  ${\h x}^*\in{\mathbb R}^n$  with
 dynamic range with ${\tt D}=1$ and let the sparsity $s=1:1:16$. This results in a total of
256 distinct problem instances.

For each of the 256 distinct problems, we execute ADMM$_p$ until (\ref{StopC}) is satisfied, recording the final iterate as ${\hat{\h x}}$. Subsequently, we apply HAFAM$_T$ ($T=5,10,20,30$) until reaching the iterate that satisfies (\ref{adjLam}), denoted as ${\h x}_T^{I}$ (with the subscript $T$ to differentiate different HAFAM$_T$). For each scenario, 50 random instances are generated, and the average ${\text{IAcc}}({\hat{\h x}},{\h x}_T^{I})$ is computed. The results are illustrated in Fig. \ref{FiniteI}, where the $x$-axis and $y$-axis represent ratios of $s/m$ and $m/256$, respectively.

As observed in Fig. \ref{FiniteI}, the value of ${\text{IAcc}}({\hat{\h x}},{\h x}_T^{I})$ consistently increases with the growth of $T$, ranging between 0.98 and 1. This indicates that running HAFAM$_T$ until it satisfies (\ref{adjLam}) can identify the support set (i.e., active manifold).
It also implies that ADMM$_p$  can identify the active manifold within a finite number of iterations.


\begin{figure}[t]
	\vspace{0cm}\centering{\vspace{0cm}
		\begin{tabular}{cc}
		\includegraphics[scale = .285]{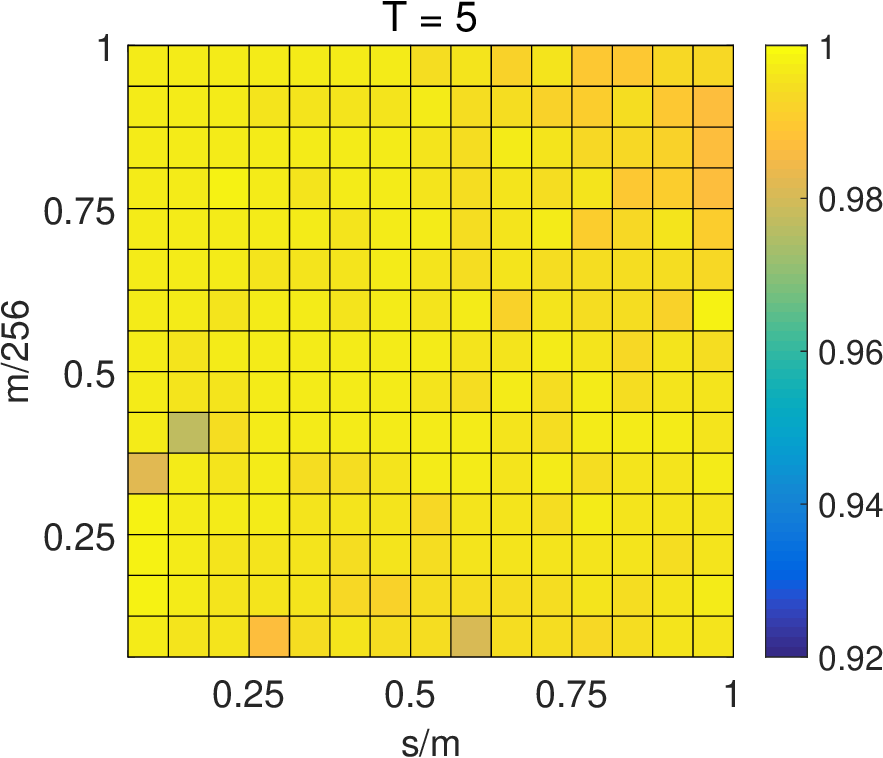}\!\!\!&\!\!\! \includegraphics[scale = .285]{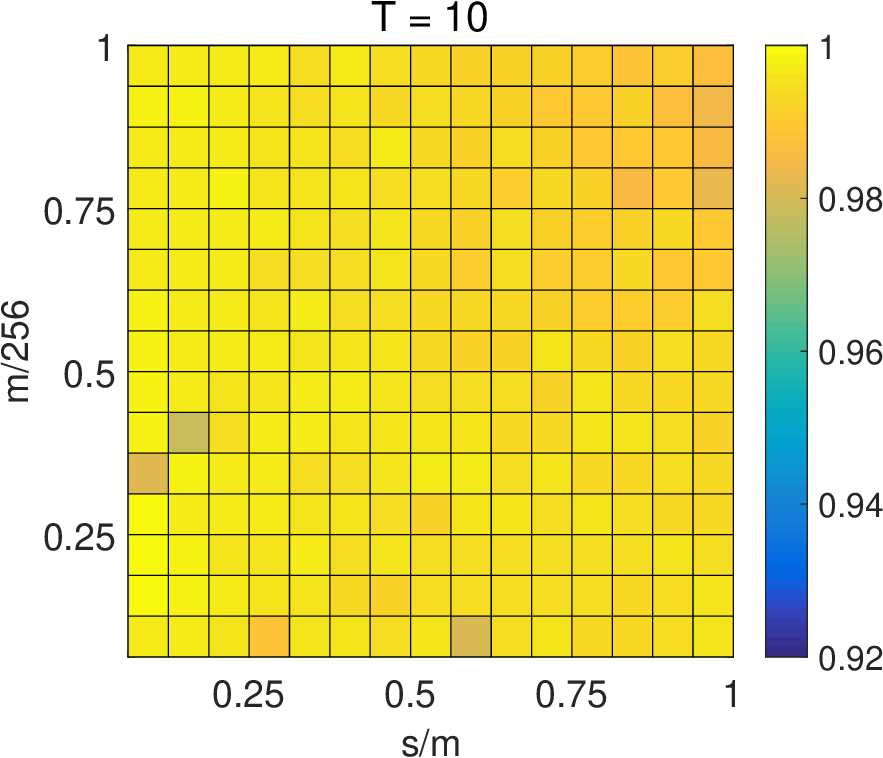} \\
\includegraphics[scale = .285]{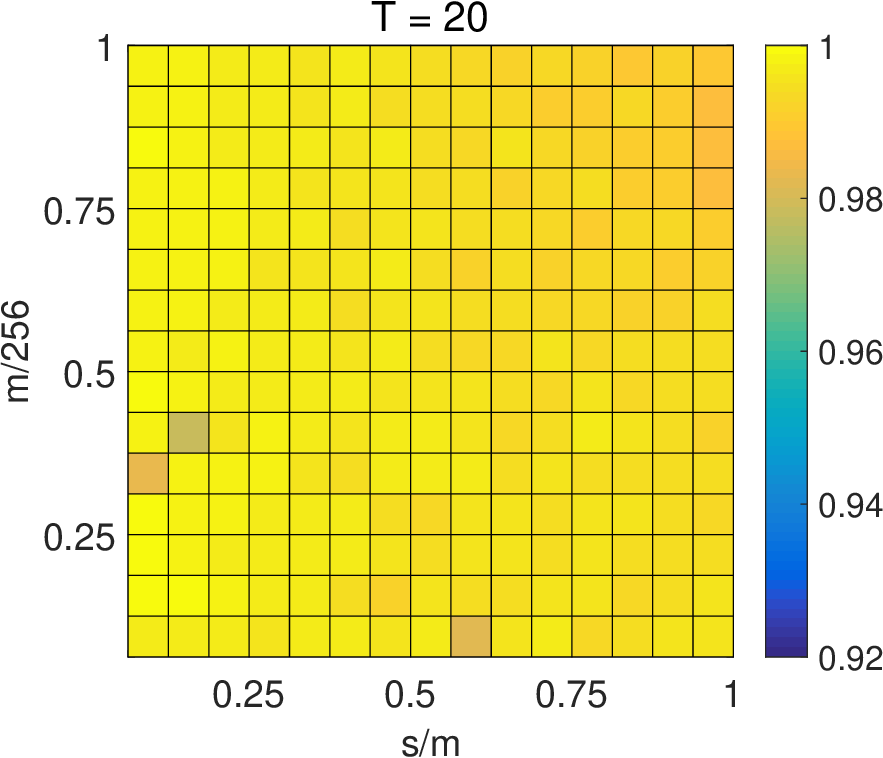}\!\!\!&\!\!\! \includegraphics[scale = .285]{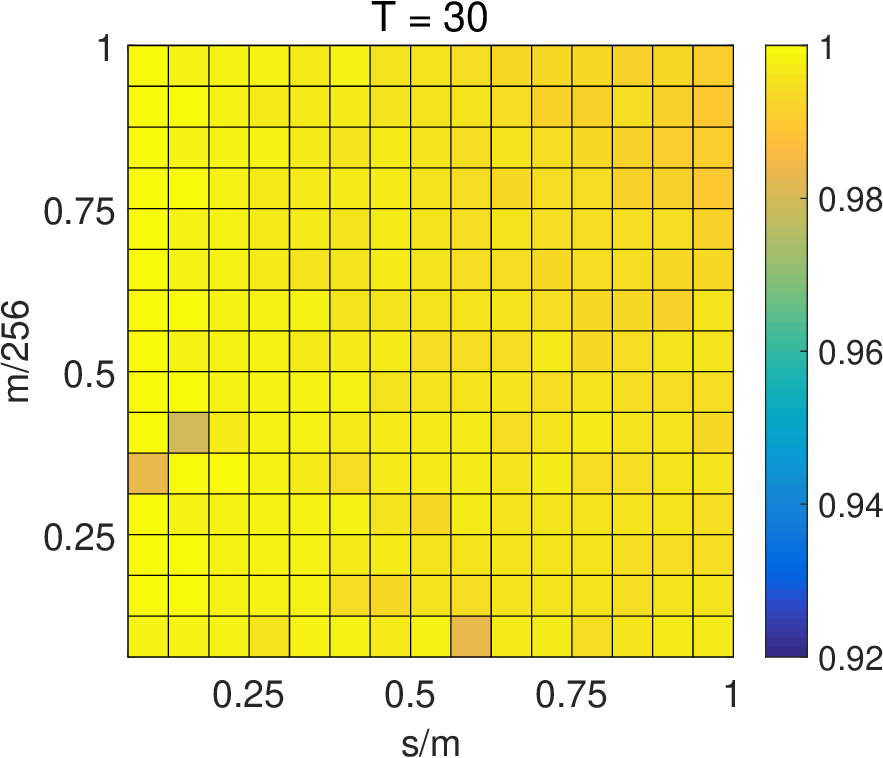} \\
		\end{tabular}
	} \caption{Each plot depicts the average of ${\text{IAcc}}({\hat{\h x}},{\h x}_T^{I})$ across 50 random instances for 256 different problems, where $n=1024$, $m=16:16:256$, and $s=1:1:16$.
  Top row: $T=5$ (left) and $T=10$ (right); Bottom row: $T=20$ (left) and $T=30$ (right).
 For $T=5,10,20,30$, the minimum value of ${\text{IAcc}}({\hat{\h x}},{\h x}_T^{I})$ in each plot  are $0.98,0.98,0.98,0.98$;
 and the maximum value of ${\text{IAcc}}({\hat{\h x}},{\h x}_T^{I})$  in each plot are  $1.00,1.00,1.00,1.00$.
}
	\label{FiniteI}\end{figure}

\subsection{Realistic Dataset}\label{realdata}
We verify the superiority of
 the proposed algorithm (HAFAM$_{T}$) in comparison with ADMM$_p$ through $L_1/L_2$ model (\ref{L1o2uncon}) with two types of data fidelity term
 $\Phi(\h x)=\frac{1}{2}\|A{\h x}-{\h b}\|_2^2$ or $\|A{\h x}-{\h b}\|_2$ on {\it real-world} datasets.
  Simultaneously, we also implement other sparse recovery models, such as $L_1$ and $L_{1/2}$ \cite{XuChang12}.
 As for $L_1$, we solve it by
ADMM.
In particular, we adopt the following datasets:
\begin{itemize}
	\item Diabetes dataset \footnote{\url{https://www4.stat.ncsu.edu/~boos/var.select/diabetes.html}};
	\item UCI standard dataset \footnote{\url{http://archive.ics.uci.edu/datasets}}: Prostate cancer, Pyrim, Servo, Mpg, Energy and SkillCrift1;
	\item StatLib datasets archive \footnote{\url{http://lib.stat.cmu.edu/datasets/}}: Space-ga.
\end{itemize}

\noindent Each dataset consists of an observation matrix $A\in\mathbb{R}^{m\times n}$ along with the response vector ${\h  b}\in\mathbb{R}^{m\times 1}$
and  we pre-processing each data (i.e., $A$ and ${\h b}$) to have mean $0$ and squared length $1$ in column sense \cite{hastie2009elements}.

 To evaluate the performance of various algorithms, we divided the dataset randomly into training and test sets with a specific ratio (${\tt ratio}$). After solving the corresponding problem of (\ref{L1o2uncon}) on the training sets, we evaluate the Mean Squared Error  via the test set and define it as
 \begin{eqnarray*}\label{TMSE}
	\text{TMSE} = \frac{1}{{N}_{\text{test}}}|| A_{\text{test}}{\h x} - {\h b}_{\text{test}}||_2^{2},
\end{eqnarray*}
 where $A_{\text{test}}, {\h b}_{\text{test}}$ denote the corresponding observations and response. $N_{\text{test}}$ is the number of test data.

For dealing with real data, we replace Line 9 in Algorithm \ref{alg:ACT} with the following scripts:
\begin{itemize}
 \item Sort ${\h {x}}^{I}$ in ascending order based on absolute values, denoted as ${\h x}^{\uparrow}$;\\[-0.4cm]
\item Define ${\bar i}\in\arg\max\{i\in[n]|\sum_{j=1}^i |x_j^\uparrow |<\tau\|{\h x}^\uparrow\|_1\}$;\\[-0.3cm]
 \item Compute ${\widehat{\h x}}={\text{HARD}}({\h x}^{I},{\h x}^{\uparrow}_{\bar i})$.
 \end{itemize}

In the initial set of experiments, we conducted tests on the model (\ref{L1o2uncon}) with $\Phi(\h x)=\frac{1}{2}\|A{\h x}-{\h b}\|^2$ by using the `Diabetes' dataset with two different ratios: (a) {\tt ratio}= 8:2 and (b) {\tt ratio}= 7:3. For all algorithms tested, we employed a 10-fold cross-validation on the training set to determine the parameter $\gamma$ in  (\ref{L1o2uncon}).

\begin{figure}[t]
	\vspace{0cm}\centering{\vspace{0cm}
		\begin{tabular}{cc}
 		\includegraphics[scale = .3]{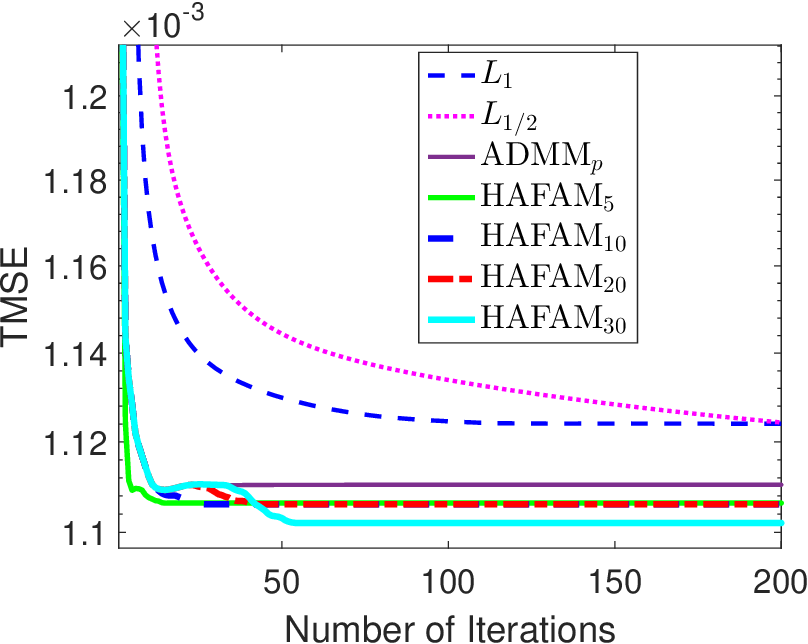}& \includegraphics[scale = .3]{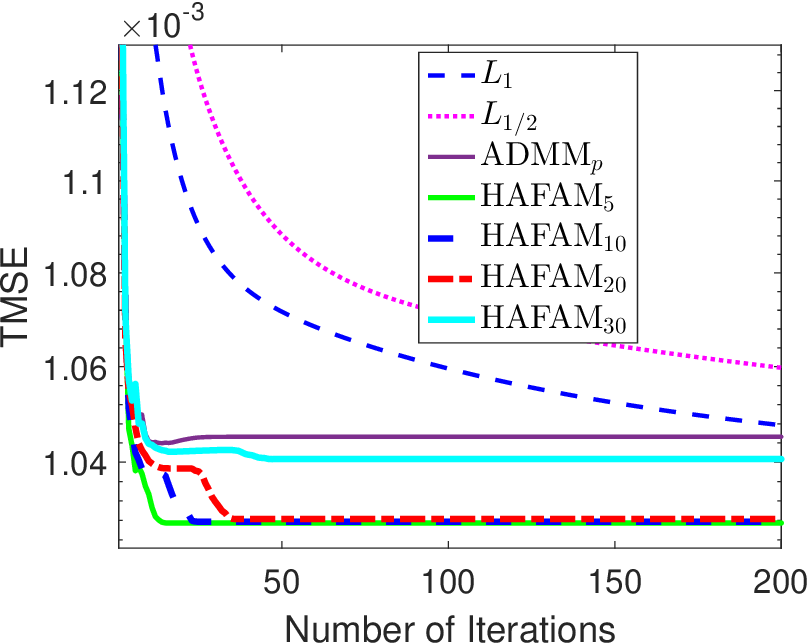} \\
\includegraphics[scale = .3]{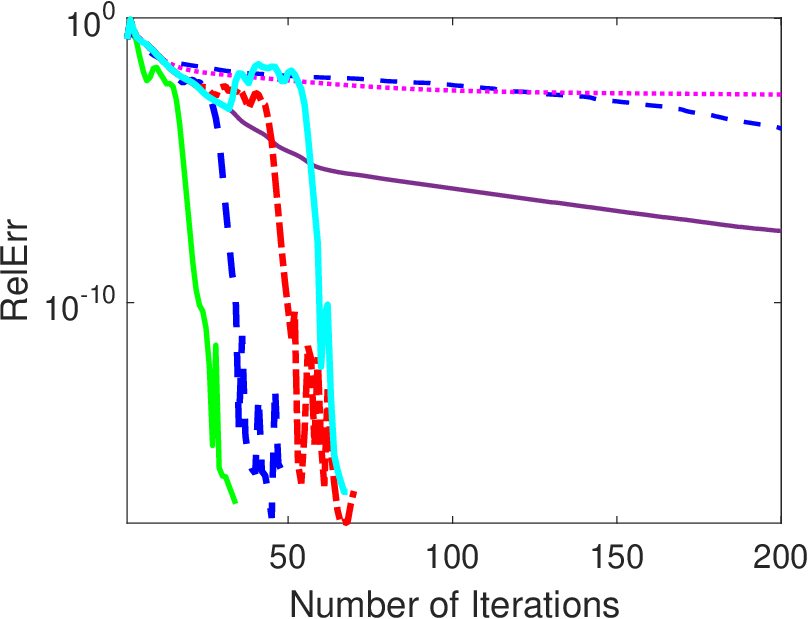} &\includegraphics[scale = .3]{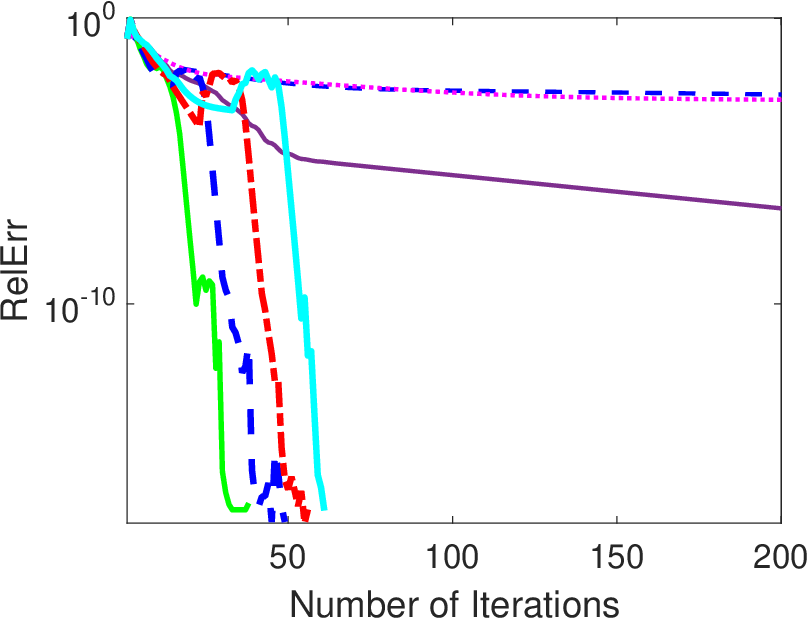}
		\end{tabular}
	} \caption{Comparison among $L_1/L_2$ via ADMM$_p$ and HAFAM$_T$, $L_1$ and $L_{1/2}$. The evolution of test MSE (TMSE) and
relative error (RelErr) on diabetes dataset: {\tt ratio} = $8:2$ (left); {\tt ratio} =
$7:3$ (right). ADMM$_p$ and HAFAM$_T$ outperform $L_1$ and $L_{1/2}$ in terms of
TMSE and RelErr. While ADMM$_p$ and HAFAM$_T$ initially overlap in TMSE
and RelErr, HAFAM$_T$ quickly converges to a smaller value when the active
manifold is identified.
}
	\label{RelMSE}\end{figure}

For real datasets, the true solutions are {\it unknown}. Instead of utilizing RErr, we use Relerr (\ref{StopC}) to evaluate the performance. Fig. \ref{RelMSE} illustrates the variation of Relerr and TMSE with respect to the number of iterations for each algorithm. All these curves represent the average of 100 runs.  All algorithms utilized Gaussian random initialization with the {\tt randn} function.

The first row of Fig. \ref{RelMSE} displays the TMSE history for each method on the test set under two different ratios ($8:2$ and $7:3$). It is evident that both ADMM$_p$ and HAFAM$_{T}$ outperform $L_{1}$ and $L_{1/2}$ in terms of TMSE.
Furthermore, HAFAM$_{T}$ rapidly converges to a solution with a smaller TMSE as soon as the semismooth Newton method starts.
 For HAFAM$_{T}$, an increase in the parameter $T$ increases iterations. This  occurs because HAFAM$_{T}$
needs more iterations to identify the active manifold.

As can be seen, HAFAM$_{30}$ converges to the lowest value of TMSE for the scenario with ${\tt ratio}=8:2$. For other values of $T$ (i.e., $T=5,10,20$), HAFAM$_{T}$ converges to almost the same TMSE under both ratios. However, for the ${\tt ratio}$ of $7:3$, HAFAM$_{30}$ converges to a slightly higher TMSE value compared to the other HAFAM$_{T}$ $(T=5,10,20)$, which might be a result of an over-fitting support set.

 Regarding RelErr, the curves of ADMM$_p$ and HAFAM$_{T}$ overlap in the early phases.
$L_1$ and $L_{1/2}$ perform almost the same and much worse than ADMM$_p$ and HAFAM$_{T}$. HAFAM$_T$ consistently achieves a smaller RelErr at the termination point compared to the others, and it converges superlinearly after identifying the active manifold.

Next, we conducted numerical simulations for the $L_1/L_2$ model (\ref{L1o2uncon}), under two types of data fitting term
  $\Phi(\h x)=\frac{1}{2}\|A{\h x}-{\h b}\|_2^2$ and $\Phi(\h x)=\|A{\h x}-{\h b}\|_2$ via HAFAM$_{T}$ (HA$_T$) and ADMM$_p$ (AD$_p$)   comparing with the corresponding $L_1$ and $L_{1/2}$ regularization models. The simulations were conducted across various real datasets with a split ratio of $8:2$. With fine-tuned parameter $\tau$ for each scenario, the results are summarized in Tables \ref{table1}-\ref{table2}
  for $\Phi(\h x)=\frac{1}{2}\|A{\h x}-{\h b}\|_2^2$ and Tables \ref{LR1}-\ref{LR2} for $\Phi(\h x)=\|A{\h x}-{\h b}\|_2$, respectively.

The results presented in Tables \ref{table1},\ref{table2} and \ref{LR1},\ref{LR2} are obtained from an average of $100$ independent runs for each instance, and performance is measured in terms of TMSE with standard deviation (${\tt std}$ in parentheses), the sparsity of the solution (${\tt nnz}$), and CPU time (CPU) in seconds.
The results in these tables correspond to two different initialization strategies: (1) Gaussian random initialization (${\tt randn}$) (Table \ref{table1} and \ref{LR1}) and (2) zero initialization (Table \ref{table2} and \ref{LR2}).

As observed from Table \ref{table1},\ref{table2} and \ref{LR1},\ref{LR2},
HAFAM$_T$ with the $L_1/L_2$ model is a promising approach, offering improved accuracy and sparsity compared to traditional $L_1$ and $L_{1/2}$ regularization.
There is minimal difference in TMSE between different initializations for HAFAM$_T$ and ADMM$_p$, highlighting robustness. Increasing $T$ tends to decrease TMSE and induce sparser solutions. HAFAM$_{30}$ performs the best across most datasets, achieving the lowest TMSE, and HAFAM$_T$ ($T=5,10,20$) shows competitive results.

Interestingly, even with small $T$, HAFAM$_T$ produces satisfactory results for real datasets. Although increasing $T$ raises TMSE, the impact is not significant, and outcomes depend on specific datasets. Generally, a smaller $T$ in HAFAM$_T$ results in reduced CPU time, while larger $T$ values lead to smaller TMSE. Therefore, it is crucial in practice to carefully select an appropriate value for $T$ in HAFAM$_T$.


\begin{table}[t]
	\centering
	\caption{Average results on real datasets from {\tt randn} initialization with data fidelity term $\Phi(\cdot)=\|\cdot\|^2_2$.
HAFAM$_T$ achieves much lower  TMSE and much sparser solutions compared with others.}\label{table1}
	{ \scriptsize\vskip -1mm\begin{tabular}{clccc}
		\hline
		Dataset               & \multirow{2}{*}{Algo. ($\tau$)} & \multirow{2}{*}{TMSE ({\tt std})}  & \multirow{2}{*}{{\tt nnz}}  & \multirow{2}{*}{CPU} \\
		(m,n)                &                                  &                                 &                   &
		\\ \hline
		\multirow{5}{*}{\begin{tabular}[c]{@{}c@{}}Diabetes\\ (422,10)\end{tabular}}
		& $L_1$	 & 1.124e-03 (1.348e-12) &	 8.00 	 & 6.12e+00 	\\
& $L_{1/2}$	 & 1.110e-03 (1.430e-07) &	 9.95 	 & {\bf 4.84e+00} 	 \\
& AD$_{p}$	 & 1.110e-03 (3.672e-11) &	 8.00 	 & 6.44e+00 	\\
& HA$_{5}\;$ (0.15)	 & 1.106e-03 (6.971e-14) &	 7.00 	 & 3.36e+01 	\\
& HA$_{10}$ (0.10)	 & 1.106e-03 (7.271e-14) &	 7.00 	 & 2.70e+01 	\\
& HA$_{20}$ (0.10)	 & 1.106e-03 (1.300e-14) &	 7.00 	 & 3.42e+01 	\\
& HA$_{30}$ (0.18)	 & {\bf 1.102e-03} (6.192e-14) &	{\bf 6.00} 	 & 3.28e+01 	 \\ \hline

		\multirow{5}{*}{\begin{tabular}[c]{@{}c@{}}Prostate  \\cancer\\ (97,8)\end{tabular}}
		& $L_1$	             & 9.542e-01 (5.048e-10)       &	 5.00 	     & 5.28e-01 	\\
        & $L_{1/2}$          & 8.239e-01 (3.028e-03)       &	 2.02 	     & \bf{1.29e-01} 	\\
        & AD$_{p}$	         & 7.579e-01 (2.500e-09)       &	 6.00 	     & 1.92e+00 	\\
        & HA$_{5}\;$ (0.15)	 & {\bf7.680e-01} (7.510e-11)       &	 \bf{3.00} 	 & 6.64e+00 	\\
        & HA$_{10}$ (0.15)	 & {\bf7.680e-01} (4.184e-11)       &	 \bf{3.00} 	 & 5.61e+00 	\\
        & HA$_{20}$ (0.15)	 & {\bf7.680e-01} (4.149e-11)  &	 \bf{3.00} 	 & 6.53e+00 	 \\
        & HA$_{30}$ (0.15)	 & {\bf7.680e-01} (4.460e-11)       &	 \bf{3.00} 	 & 7.76e+00 	\\ \hline
		\multirow{5}{*}{\begin{tabular}[c]{@{}c@{}}Pyrim\\ (74,27)\end{tabular}}
	    & $L_1$	             & 2.528e-03 (8.748e-13)       &	 20.00 	         & \bf{1.58e+00} 	\\
        & $L_{1/2}$          & 2.545e-03 (9.937e-05)       &	 18.78 	         & 4.45e+00 	\\
        & AD$_{p}$	         & 2.457e-03 (5.152e-13)       &	 22.00 	         & 1.17e+01 	\\
        & HA$_{5}\;$ (0.08)	 & 2.405e-03 (4.192e-05)       &	 \bf{15.29} 	 & 6.31e+00 	 \\
        & HA$_{10}$ (0.08)	 & 2.353e-03 (7.696e-05)       &	 15.99 	         & 5.71e+00 	 \\
        & HA$_{20}$ (0.08)	 & {\bf2.310e-03} (9.464e-14)  &	 16.00 	         & 9.19e+00 	 \\
        & HA$_{30}$ (0.08)	 &  {\bf2.310e-03 }(2.638e-13)       &	 16.00 	         & 5.82e+00 	 \\ \hline
		
		\multirow{5}{*}{\begin{tabular}[c]{@{}c@{}}Servo\\ (167,19)\end{tabular}}
		& $L_1$	             & 6.634e-01 (1.147e-07)       &	 16.00 	         & \bf{1.94e+00} 	\\
        & $L_{1/2}$          & 6.631e-01 (9.635e-05)       &	 17.60 	         & 4.63e+00 	\\
        & AD$_{p}$	         & 6.631e-01 (1.303e-07)       &	 16.00 	         & 1.10e+01 	 \\
        & HA$_{5}\;$ (0.06)	 & 6.637e-01 (2.027e-02)       &	 13.73 	         & 4.72e+01 	\\
        & HA$_{10}$ (0.08)	 & 6.715e-01 (2.066e-02)       &	 \bf{12.93} 	 & 3.59e+01 	 \\
        & HA$_{20}$ (0.06)	 & {\bf6.621e-01} (1.689e-02)  &	 13.97 	         & 3.67e+01 	\\
        & HA$_{30}$ (0.06)	 & 6.629e-01 (2.003e-02)       &	 13.67 	         & 3.88e+01 	\\ \hline
		\multirow{5}{*}{\begin{tabular}[c]{@{}c@{}}Mpg\\ (392,7)\end{tabular}}
		& $L_1$	             & 1.003e-03 (2.656e-11)       &	 7.00 	        & 6.11e+00 	\\
        & $L_{1/2}$          & 1.003e-03 (8.832e-07)       &	 7.00 	        & 3.36e+00 	\\
        & AD$_{p}$	         & 1.003e-03 (9.494e-13)       &	 7.00 	        & 7.14e-01 	\\
        & HA$_{5}\;$ (0.10)	 & {\bf9.768e-04} (6.970e-17)       &	 \bf{6.00} 	    & \bf{5.20e-01} 	\\
        & HA$_{10}$ (0.10)	 & {\bf9.768e-04} (3.222e-17)       &	 \bf{6.00} 	    & 6.50e-01 	 \\
        & HA$_{20}$ (0.10)	 & {\bf9.768e-04} (4.535e-17)       &	 \bf{6.00} 	    & 8.85e-01 	\\
        & HA$_{30}$ (0.10)	 & {\bf9.768e-04} (1.342e-17)  &	 \bf{6.00} 	    & 1.14e+00 	\\ \hline
		\multirow{5}{*}{\begin{tabular}[c]{@{}c@{}}Space-ga\\ (3107,6)\end{tabular}}
		& $L_1$	             & 1.436e-04 (4.613e-15)         &	 6.00 	       & 8.15e+01 	 \\
        & $L_{1/2}$          & 1.437e-04 (1.286e-06)         &	 5.41 	       & \bf{1.38e+01} 	\\
        & AD$_{p}$	     & 1.455e-04 (3.934e-15)         &	 6.00 	       & 4.67e+01 	\\
        & HA$_{5}\;$ (0.05)	 & 1.421e-04 (3.813e-16)         &	\bf{5.00} 	   & 1.06e+02 	\\
        & HA$_{10}$ (0.05)	 & 1.421e-04 (1.066e-15)         &	\bf{5.00} 	   & 1.22e+02 	 \\
        & HA$_{20}$ (0.05)	 & 1.421e-04 (1.369e-14)         &	\bf{5.00} 	   & 4.94e+02 	 \\
        & HA$_{30}$ (0.05)	 & {\bf1.419e-04} (1.731e-16)    &	 \bf{5.00} 	   & 1.22e+02 	 \\ \hline
		
		\multirow{5}{*}{\begin{tabular}[c]{@{}c@{}}Energy\\ (768,8)\end{tabular}}
		& $L_1$	             & 1.156e-04 (2.557e-12)           &	 7.00 	     & 1.39e+01 	\\
        & $L_{1/2}$          & 1.151e-04 (2.478e-07)           &	 6.91 	     & 5.02e+00 	\\
        & AD$_{p}$	     & 1.156e-04 (8.633e-10)           &	 7.00 	     & \bf{3.90e+00} 	 \\
        & HA$_{5}\;$ (0.10)	 & {\bf 1.142e-04} (3.194e-07)           &	 5.13 	     & 2.22e+01 	 \\
        & HA$_{10}$ (0.10)	 & {\bf1.142e-04} (3.351e-07)      &	 5.12 	     & 1.95e+01 	 \\
        & HA$_{20}$ (0.08)	 & {\bf1.142e-04} (3.352e-07)           &	 \bf{5.11} 	 & 2.87e+01 	 \\
        & HA$_{30}$ (0.08)	 & {\bf1.142e-04} (3.352e-07)           &	 \bf{5.11} 	 & 6.53e+00 	\\ \hline

		\multirow{5}{*}{\begin{tabular}[c]{@{}c@{}}SkillCraft1\\ (3338,18)\end{tabular}}
		& $L_1$	 & 2.294e-04 (2.703e-13) &	 8.00 	 & 2.09e+02 	 \\
& $L_{1/2}$	 & 2.297e-04 (5.905e-07) &	 7.09 	 & {\bf 7.09e+00 }	\\
& AD$_{p}$	 & 2.279e-04 (1.627e-13) &	 8.00 	 & 1.90e+02 	\\
& HA$_{5}\;$ (0.02)	 & 2.275e-04 (1.388e-06) &	 6.83 	 & 5.78e+02 	\\
& HA$_{10}$ (0.02)	 & 2.271e-04 (8.069e-07) &	 6.52 	 & 5.37e+02 	\\
& HA$_{20}$ (0.02)	 & 2.271e-04 (8.087e-07) &	 6.40 	 & 5.48e+02 	 \\
& HA$_{30}$ (0.02)	 & {\bf 2.270e-04} (6.872e-07) &	 {\bf 6.20} 	 & 5.40e+02 	 \\ \hline

	\end{tabular}}
\end{table}

\begin{table}[t]
	\centering
	{ \scriptsize\vskip -2mm\caption{Average results on real datasets from zero initialization with data fidelity term $\Phi(\cdot)=\|\cdot\|^2_2$.
HAFAM$_T$  outperforms the others with lower TMSE and sparser solutions.
The final TMSE from HAFAM$_T$ is not sensitive to $T$.}\label{table2}
	\begin{tabular}{clccc}
		\hline
		 Datase                                           & \multirow{2}{*}{Algo. ($\tau$)}
		 & \multirow{2}{*}{TMSE ({\tt std})}                 & \multirow{2}{*}{{\tt nnz}}     & \multirow{2}{*}{CPU}
		 \\ (m,n)            &                              &                          &                  &
		 \\ \hline
		\multirow{5}{*}{\begin{tabular}[c]{@{}c@{}}Diabetes\\ (422,10)\end{tabular}}
		& $L_1$	     		    & 1.110e-03 (2.935e-12) 	   &	 10.00 	 	 & 2.17e+01 \\
		& $L_{1/2}$	 		    & 1.110e-03 (6.538e-19) 	   &	 10.00 	 	 & 2.72e+00 \\
		& AD$_{p}$ 		        & 1.110e-03 (4.359e-19) 	   &	 8.00 	 	 & 3.81e+00 \\
		& HA$_{5}$\;\;\;(0.10)	& 1.106e-03 (4.359e-19)        &	 7.00 	 	 & \bf{4.50e-01} \\
		& HA$_{10}$	(0.15)      & 1.106e-03 (6.538e-19) 	   &	 7.00 	 	 & 5.25e-01 \\
		& HA$_{20}$	(0.15)      & 1.106e-03 (6.538e-19)        &	 7.00 	 	 & 6.80e-01 \\
		& HA$_{30}$	(0.10)      & {\bf1.102e-03} (6.538e-19)   &	 \bf{6.00} 	 & 8.32e-01 \\ \hline
		\multirow{5}{*}{\begin{tabular}[c]{@{}c@{}}Prostate \\ cancer\\ (97,8)\end{tabular}}
			& $L_1$	 & 9.542e-01 (7.811e-16) &	 5.00 	 & 5.65e-01 	\\
& $L_{1/2}$	 & 8.423e-01 (1.004e-15) &	 {\bf 3.00} 	 & {\bf 1.96e-01 }	\\
& AD$_{p}$	 & 7.579e-01 (8.927e-16) &	 6.00 	 & 2.18e+00 	\\
& HA$_{5}\;$ (0.15)	 & {\bf 7.680e-01} (1.562e-15) &	{\bf 3.00 }	 & 3.79e-01 	\\
& HA$_{10}$ (0.15)	 & {\bf 7.680e-01} (1.116e-15) &	 {\bf 3.00 }	 & 8.81e+01 	\\
& HA$_{20}$ (0.15)	 & {\bf 7.680e-01} (7.811e-16) &	 {\bf 3.00 }	 & 4.94e-01 	\\
& HA$_{30}$ (0.15)	 & {\bf 7.680e-01} (6.695e-16) &	 {\bf 3.00 }	 & 6.00e-01 	\\ \hline
		
		\multirow{5}{*}{\begin{tabular}[c]{@{}c@{}}Pyrim\\ (74,27)\end{tabular}}
		& $L_1$	              & 2.516e-03 (1.688e-12)           &	 19.00 	      & 3.06e+00 	  \\
        & $L_{1/2}$          & 2.455e-03 (3.487e-18)           &	 19.00 	      & 8.95e+00 	\\
        & AD$_{p}$	      & 2.450e-03 (8.717e-19)           &	 19.00 	      & 1.19e+01 	\\
        & HA$_{5}\;$ (0.08)	  & 2.398e-03 (2.615e-18)           &	 \bf{15.00}   & \bf{1.03e+00} 	\\
        & HA$_{10}$ (0.08)	  & {\bf 2.309e-03} (1.743e-18)      &	 16.00 	      & 2.57e+02 	\\
        & HA$_{20}$ (0.08)	  & {\bf2.309e-03} (2.179e-18)           &	 16.00 	      & 2.33e+00 	\\
        & HA$_{30}$ (0.08)	  & {\bf2.309e-03} (3.923e-18)           &	 16.00 	      & 2.73e+00 	 \\ \hline
		
		\multirow{5}{*}{\begin{tabular}[c]{@{}c@{}}Servo\\ (167,19)\end{tabular}}
	   & $L_1$	              & 6.634e-01 (1.339e-15)           &	 16.00 	         & 3.60e+00 	\\
       & $L_{1/2}$          & 6.632e-01 (3.347e-16)           &	 18.00 	         & 9.49e+00 	\\
       & AD$_{p}$	          & 6.631e-01 (3.347e-16)           &	 16.00 	         & 2.03e+01 	\\
       & HA$_{5}\;$ (0.08)	  & {\bf6.356e-01} (1.562e-15)           &	 \bf{13.00} 	 & \bf{1.38e+00} 	\\
       & HA$_{10}$ (0.08)	  & {\bf6.356e-01} (7.811e-16)           &	 \bf{13.00} 	 & 1.93e+02 	 \\
       & HA$_{20}$ (0.08)	  & {\bf6.356e-01} (5.579e-16)      &	 \bf{13.00} 	 & 2.05e+02 	 \\
       & HA$_{30}$ (0.08)	  & 6.357e-01 (1.562e-15)           &	 \bf{13.00} 	 & 1.86e+00 	 \\ \hline

		\multirow{5}{*}{\begin{tabular}[c]{@{}c@{}}Mpg\\ (392,7)\end{tabular}}
		& $L_1$	             & 1.003e-03 (3.339e-12)             &	 7.00 	         & 1.18e+01 	 \\
        & $L_{1/2}$         & 1.003e-03 (1.743e-18)             &	 7.00 	         & 6.82e+00 	\\
        & AD$_{p}$	     & 1.003e-03 (1.308e-18)             &	 7.00 	         & 1.38e+00 	 \\
        & HA$_{5}\;$ (0.10)	 & {\bf 9.768e-04} (1.308e-18)        &	 \bf{6.00} 	     & \bf{9.05e-01} 	 \\
        & HA$_{10}$ (0.10)	 &{ \bf 9.768e-04} (1.308e-18)        &	  \bf{6.00} 	 & 1.11e+00 	\\
        & HA$_{20}$ (0.10)	 & { \bf 9.768e-04} (1.743e-18)             &	  \bf{6.00} 	 & 1.67e+00 	 \\
        & HA$_{30}$ (0.10)	 &{ \bf 9.768e-04} (2.179e-18)             &	  \bf{6.00} 	 & 2.16e+00 	\\ \hline
		\multirow{5}{*}{\begin{tabular}[c]{@{}c@{}}Space-ga\\ (3107,6)\end{tabular}}
		& $L_1$	             & 1.436e-04 (3.814e-19)             &	 6.00 	        & 1.37e+02 	 \\
        & $L_{1/2}$         & 1.430e-04 (2.179e-19)             &	 5.00 	        & \bf{2.32e+01} 	\\
        & AD$_{p}$	     & 1.455e-04 (1.090e-19)             &	 6.00 	        & 7.66e+01 	\\
        & HA$_{5}\;$ (0.05)	 & 1.421e-04 (8.172e-20)             &	 \bf{5.00} 	    & 5.73e+01 	 \\
        & HA$_{10}$ (0.05)	 & 1.421e-04 (1.634e-19)             &	 \bf{5.00} 	    & 6.47e+01 	\\
        & HA$_{20}$ (0.05)	 & 1.421e-04 (1.634e-19)             &	 \bf{5.00} 	    & 1.12e+03 	\\
        & HA$_{30}$ (0.05)	 & {\bf1.419e-04} (0.000e+00)        &	\bf{5.00} 	    & 9.42e+01 	 \\ \hline

		\multirow{5}{*}{\begin{tabular}[c]{@{}c@{}}Energy\\ (768,8)\end{tabular}}
		& $L_1$	             & 1.156e-04 (2.179e-19)             &	 7.00 	          & 2.03e+01 	 \\
        & $L_{1/2}$         & 1.149e-04 (1.090e-19)             &	 7.00 	          & 9.69e+00 	\\
        & AD$_{p}$	     & 1.156e-04 (1.498e-19)             &	 7.00 	          & 7.89e+00 	 \\
        & HA$_{5}\;$ (0.10)	 & {\bf1.141e-04} (5.448e-20)        &	\bf{5.00} 	      & \bf{3.38e+00} 	 \\
        & HA$_{10}$ (0.10)	 & {\bf1.141e-04 }(2.724e-20)        &	 \bf{5.00} 	      & 5.76e+00 	 \\
        & HA$_{20}$ (0.08)	 & {\bf1.141e-04} (1.362e-20)       &	 \bf{5.00} 	      & 3.83e+02 	 \\
        & HA$_{30}$ (0.08)	 & {\bf1.141e-04} (1.907e-19)             &	 \bf{5.00} 	      & 9.63e+00 	\\ \hline

		\multirow{5}{*}{\begin{tabular}[c]{@{}c@{}}SkillCraft1\\ (3338,18)\end{tabular}}
& $L_1$	              & 2.294e-04 (3.269e-19)      &	 8.00 	      & 1.96e+02 	 \\
& $L_{1/2}$          & 2.299e-04 (5.176e-19)      &	 7.00 	      & \bf{6.61e+00} 	 \\
& AD$_{p}$	      & 2.279e-04 (1.634e-19)      &	 8.00 	      & 1.39e+02 	\\
& HA$_{5}\;$ (0.02)	  & {\bf2.269e-04} (2.724e-20)      &	 \bf{6.00} 	  & 8.74e+02 	\\
& HA$_{10}$ (0.02)	  & {\bf2.269e-04} (2.452e-19)      &	 \bf{6.00} 	  & 8.70e+01 	\\
& HA$_{20}$ (0.02)	  & {\bf2.269e-04} (3.541e-19)      &	 \bf{6.00} 	  & 1.02e+02 	 \\
& HA$_{30}$ (0.02)	  & {\bf2.269e-04}  (2.179e-19)&	 \bf{6.00} 	  & 1.21e+02 	\\ \hline

	\end{tabular}}
\end{table}

\begin{table}[tt]
	\centering
	\caption{Average results on real datasets from {\tt randn} initialization with data fidelity term $\Phi(\cdot)=\|\cdot\|_2$.
HAFAM$_T$ consistently outperforms others across datasets.}
	\label{LR1}
	{ \scriptsize\vskip -1mm\begin{tabular}{clccc}
		\hline
		Datase                                            & \multirow{2}{*}{Algo. ($\tau$)}
		& \multirow{2}{*}{TMSE ({\tt std})}                 & \multirow{2}{*}{{\tt nnz}}     & \multirow{2}{*}{CPU}
		\\ (m,n)           &                               &                          &                  &
		\\ \hline
		\multirow{5}{*}{\begin{tabular}[c]{@{}c@{}}Diabetes\\ (422,10)\end{tabular}}
	&$L_1$	 & 1.125e-03 (3.051e-18) &	 8.00 	 & {\bf 1.84e+00} 	\\
& $L_{1/2}$	 & 1.127e-03 (1.481e-05) &	 7.11 	 & 3.22e+00 	 \\
&  AD$_{p}$	 & 1.129e-03 (2.396e-05) &	 7.22 	 & 6.59e+00 	\\
& HA$_{5}\;$ (0.15)	 & 1.155e-03 (5.948e-05) &	 6.50 	 & 2.07e+01 	\\
& HA$_{10}$ (0.15)	 & 1.135e-03 (3.637e-05) &	{\bf 5.99} 	 & 1.30e+01 	\\
& HA$_{20}$ (0.15)	 & 1.118e-03 (1.287e-05) &	 6.08 	 & 7.34e+00 	 \\
& HA$_{30}$ (0.15)	 & {\bf 1.117e-03} (1.186e-05) &	 6.06 	 & 8.73e+00 	\\ \hline

		\multirow{5}{*}{\begin{tabular}[c]{@{}c@{}}Prostate\\ cancer\\ (97,8)\end{tabular}}
		&$L_1$	 & 8.046e-01 (1.674e-15) &	 7.00 	 & 4.56e-01 	 \\
& $L_{1/2}$	 & 8.078e-01 (4.733e-02) &	 4.23 	 & 1.57e+00 	 \\
&  AD$_{p}$	 & {\bf 7.659e-01} (2.141e-08) &	 {\bf 2.00} 	 & 2.08e+00 	\\
& HA$_{5}\;$ (0.10)	 & 7.699e-01 (4.687e-03) &	 3.00 	 & {\bf 4.44e-01} 	 \\
& HA$_{10}$ (0.20)	 & 7.679e-01 (1.617e-11) &	 3.00 	 & 4.87e-01 	\\
& HA$_{20}$ (0.30)	 & 7.679e-01 (2.032e-11) &	 3.00 	 & 7.54e-01 	\\
& HA$_{30}$ (0.30)	 & 7.679e-01 (6.917e-11) &	 3.00 	 & 8.67e-01 	 \\ \hline
		\multirow{5}{*}{\begin{tabular}[c]{@{}c@{}}Pyrim\\ (74,27)\end{tabular}}
		& $L_1$	 & 2.515e-03 (1.743e-18) &	 19.00 	 & 4.83e+00 	 \\
& $L_{1/2}$	 & 2.698e-03 (2.284e-04) &	 16.87 	 & 4.92e+00 	 \\
&  AD$_{p}$	 & 2.465e-03 (2.813e-10) &	 18.00 	 & 8.31e+00 	\\
& HA$_{5}\;$ (0.05)	 & 2.550e-03 (2.543e-04) &	 {\bf 14.45} 	 & {\bf 1.31e+01} 	\\
& HA$_{10}$ (0.05)	 & 2.471e-03 (7.683e-05) &	 15.66 	 & 1.61e+01 	\\
& HA$_{20}$ (0.05)	 & {\bf 2.428e-03} (7.465e-06) &	 15.51 	 & 2.15e+01 	\\
& HA$_{30}$ (0.05)	 & {\bf 2.428e-03} (1.156e-06) &	 15.21 	 & 9.97e+00 	\\ \hline
			
		\multirow{5}{*}{\begin{tabular}[c]{@{}c@{}}Servo\\ (167,19)\end{tabular}}
	& $L_1$	 & 6.719e-01 (1.227e-15) &	 19.00 	 & 6.17e+00 	 \\
& $L_{1/2}$	 & 6.631e-01 (2.535e-04) &	 18.47 	 & 6.69e+00 	\\
&  AD$_{p}$	 & 6.631e-01 (1.139e-05) &	 17.94 	 & 3.07e+01 	 \\
& HA$_{5}\;$ (0.01)	 & 6.632e-01 (2.969e-03) &	 17.51 	 & 8.38e+00 	 \\
& HA$_{10}$ (0.01)	 & {\bf 6.630e-01} (1.448e-03) &	 {\bf 17.42} 	 & {\bf 1.00e+01} 	\\
& HA$_{20}$ (0.01)	 & 6.631e-01 (1.044e-03) &	 17.48 	 & 1.02e+01 	 \\
& HA$_{30}$ (0.01)	 & 6.632e-01 (8.674e-04) &	 17.39 	 & 9.08e+00 	 \\ \hline
		\multirow{5}{*}{\begin{tabular}[c]{@{}c@{}}Mpg\\ (392,7)\end{tabular}}
		& $L_1$	 & 1.078e-03 (1.308e-18) &	 4.00 	 & 9.52e-01 	\\
& $L_{1/2}$	 & 1.038e-03 (1.132e-04) &	 {\bf 2.95} 	 & {\bf 3.66e-01} 	 \\
&  AD$_{p}$	 & 1.022e-03 (8.388e-11) &	 4.00 	 & 1.93e+00 	\\
& HA$_{5}\;$ (0.05)	 & 1.100e-03 (2.227e-04) &	 3.38 	 & 5.60e+01 	\\
& HA$_{10}$ (0.05)	 & 1.139e-03 (2.817e-04) &	 3.00 	 & 3.73e+01 	\\
& HA$_{20}$ (0.05)	 & {\bf 1.020e-03} (9.156e-07) &	 3.40 	 & 1.14e+00 	 \\
& HA$_{30}$ (0.05)	 & 1.022e-03 (3.663e-07) &	 3.96 	 & 1.47e+00 	 \\ \hline
		\multirow{5}{*}{\begin{tabular}[c]{@{}c@{}}Space-ga\\ (3107,6)\end{tabular}}
	& $L_1$	 & 1.865e-04 (1.907e-19) &	 {\bf 3.00} 	 & 1.72e+00 	 \\
& $L_{1/2}$	 & 1.453e-04 (2.997e-07) &	 6.00 	 & 1.88e+01 	 \\
&  AD$_{p}$	 & 1.448e-04 (1.619e-13) &	 6.00 	 & 3.71e+01 	 \\
& HA$_{5}\;$ (0.01)	 & 1.454e-04 (3.518e-06) &	 5.73 	 & {\bf 8.28e-01} 	\\
& HA$_{10}$ (0.01)	 & {\bf 1.446e-04} (2.393e-07) &	 5.63 	 & 9.34e-01 	\\
& HA$_{20}$ (0.01)	 & 1.447e-04 (2.225e-07) &	 5.72 	 & 1.35e+00 	 \\
& HA$_{30}$ (0.01)	 & 1.448e-04 (3.768e-15) &	 6.00 	 & 1.65e+00 	\\ \hline
\multirow{5}{*}{\begin{tabular}[c]{@{}c@{}}Energy\\ (768,8)\end{tabular}}
	& $L_1$	 & 1.277e-04 (1.907e-19) &	{\bf 4.00} 	 & {\bf 6.98e-01} 	 \\
& $L_{1/2}$	 & 1.240e-04 (4.282e-09) &	{\bf 4.00} 	 & 1.04e+00 	\\
&  AD$_{p}$	 & 1.159e-04 (3.882e-08) &	 6.83 	 & 3.04e+01 	\\
& HA$_{5}\;$ (0.12)	 & 1.210e-04 (1.571e-05) &	 4.92 	 & 3.50e+01 	\\
& HA$_{10}$ (0.12)	 & 1.200e-04 (3.989e-06) &	 4.16 	 & 1.86e+01 	 \\
& HA$_{20}$ (0.12)	 & 1.179e-04 (3.818e-06) &	 {\bf 4.00} 	 & 1.29e+00 	\\
& HA$_{30}$ (0.12)	 & {\bf 1.155e-04} (8.291e-07) &	{\bf 4.00} 	 & 1.56e+00 	\\ \hline
		
		\multirow{5}{*}{\begin{tabular}[c]{@{}c@{}}SkillCraft1\\ (3338,18)\end{tabular}}
		& $L_1$	 & 2.293e-04 (2.724e-20) &	 8.00 	 & 2.74e+00 	 \\
& $L_{1/2}$	 & 2.297e-04 (5.408e-07) &	 7.19 	 & 3.55e+00 	 \\
&  AD$_{p}$	 & 2.278e-04 (4.941e-13) &	 8.00 	 & 5.74e+00 	 \\
& HA$_{5}\;$ (0.05)	 & 2.338e-04 (2.319e-05) &	 8.64 	 & 4.82e+00 	\\
& HA$_{10}$ (0.05)	 & 2.283e-04 (2.883e-06) &	 7.21 	 & {\bf 2.51e+00 }	 \\
& HA$_{20}$ (0.05)	 & 2.283e-04 (2.487e-06) &	 6.65 	 & 3.16e+00 	 \\
& HA$_{30}$ (0.05)	 & {\bf 2.275e-04} (1.230e-06) &	{\bf 6.43} 	 & 3.54e+00 	\\ \hline

	\end{tabular}}
\end{table}

\begin{table}[tt]
	\centering
\caption{Average results on real datasets from zero initialization with data fidelity term $\Phi(\cdot)=\|\cdot\|_2$.
 HAFAM$_{T}$ continues to perform well.}
	\label{LR2}
{\scriptsize\vskip -1mm\begin{tabular}{clccc}
		\hline
		Datase                                           & \multirow{2}{*}{Algo. ($\tau$)}
		& \multirow{2}{*}{TMSE ({\tt std})}                 & \multirow{2}{*}{{\tt nnz}}     & \multirow{2}{*}{CPU}
		\\ (m,n)           &                             &                          &                  &
		\\ \hline
		\multirow{5}{*}{\begin{tabular}[c]{@{}c@{}}Diabetes\\ (422,10)\end{tabular}}
	& $L_1$	 & 1.125e-03 (3.051e-18) &	 8.00 	 & 1.72e+00 	\\
& $L_{1/2}$	 & 1.129e-03 (1.526e-18) &	 7.00 	 & 1.05e+00 	\\
&  AD$_{p}$	 & 1.125e-03 (1.526e-18) &	 7.00 	 & 3.35e+00 	\\
& HA$_{5}\;$ (0.15)	 & {\bf 1.122e-03} (1.526e-18) &	 {\bf 6.00} 	 & {\bf 4.51e-01} \\	
& HA$_{10}$ (0.15)	 & 1.132e-03 (1.526e-18) &	 {\bf 6.00} 	 & 5.17e-01 	\\
& HA$_{20}$ (0.15)	 & {\bf 1.122e-03} (8.717e-19) &	 {\bf 6.00} 	 & 1.32e+00 \\	
& HA$_{30}$ (0.15)	 & {\bf 1.122e-03} (2.833e-18) &	  {\bf 6.00} 	 & 2.49e+00 \\	 \hline
		\multirow{5}{*}{\begin{tabular}[c]{@{}c@{}}Prostate \\ cancer\\ (97,8)\end{tabular}}
	& $L_1$	 & 8.046e-01 (1.674e-15) &	 7.00 	 & 4.20e-01 	\\
& $L_{1/2}$	 & 8.407e-01 (3.347e-16) &	 5.00 	 & {\bf 1.35e+00} 	 \\
&  AD$_{p}$	 & {\bf 7.659e-01} (0.000e+00) &	{\bf 2.00} 	 & 2.13e+00 	 \\
& HA$_{5}\;$ (0.10)	 & 7.679e-01 (6.695e-16) &	 3.00 	 & 4.45e-01 	  \\
& HA$_{10}$ (0.20)	 & 7.679e-01 (1.227e-15) &	 3.00 	 & 4.94e-01 	 \\
& HA$_{20}$ (0.30)	 & 7.679e-01 (1.562e-15) &	 3.00 	 & 7.82e-01 	\\
& HA$_{30}$ (0.30)	 & 7.679e-01 (1.116e-16) &	 3.00 	 & 8.80e-01 	\\ \hline
		
		\multirow{5}{*}{\begin{tabular}[c]{@{}c@{}}Pyrim\\ (74,27)\end{tabular}}
		& $L_1$	 & 2.520e-03 (3.487e-18) &	 21.00 	 & 3.43e+00 	\\
& $L_{1/2}$	 & 2.550e-03 (2.179e-18) &	 17.00 	 & 4.96e+00 	 \\
&  AD$_{p}$	 & 2.465e-03 (4.359e-18) &	 18.00 	 & 8.01e+00 	 \\
& HA$_{5}\;$ (0.05)	 & 2.870e-03 (7.410e-18) &	 {\bf 13.00 }	 & 1.99e+00 	 \\
& HA$_{10}$ (0.05)	 & {\bf 2.429e-03} (1.743e-18) &	 15.00 	 & {\bf 1.40e+00} 	 \\
& HA$_{20}$ (0.05)	 & {\bf 2.429e-03} (1.308e-18) &	 15.00 	 & 1.52e+00 	\\
& HA$_{30}$ (0.05)	 & {\bf 2.429e-03} (6.538e-18) &	 15.00 	 & 1.59e+00 	\\ \hline
		\multirow{5}{*}{\begin{tabular}[c]{@{}c@{}}Servo\\ (167,19)\end{tabular}}
		& $L_1$	 & 6.653e-01 (1.562e-15) &	 19.00 	 & 6.40e+00 	 \\
& $L_{1/2}$	 & 6.630e-01 (1.116e-15) &	 18.00 	 & 6.79e+00 	\\
&  AD$_{p}$	 & 6.633e-01 (1.116e-16) &	 18.00 	 & 3.09e+01 	 \\
& HA$_{5}\;$ (0.05)	 & 6.622e-01 (3.347e-16) &	 15.00 	 & 8.34e+00 	\\
& HA$_{10}$ (0.05)	 & 6.736e-01 (3.347e-16) &	 {\bf 13.00} 	 & {\bf 8.41e-01} 	\\
& HA$_{20}$ (0.08)	 & 6.640e-01 (4.463e-16) &	{\bf 13.00 }	 & 9.45e-01 	\\
& HA$_{30}$ (0.08)	 & {\bf 6.529e-01} (2.232e-16) &	{\bf 13.00 }	 & 9.42e+00 	\\ \hline

		\multirow{5}{*}{\begin{tabular}[c]{@{}c@{}}Mpg\\ (392,7)\end{tabular}}
		& $L_1$	 & 1.078e-03 (1.308e-18) &	 4.00 	 & 8.90e-01 	 \\
& $L_{1/2}$	 & 1.057e-03 (1.743e-18) &	 3.00 	 & 3.37e-01 	 \\
&  AD$_{p}$	 & 1.022e-03 (1.090e-18) &	 4.00 	 & 1.91e+00 	\\
& HA$_{5}\;$ (0.05)	 & 1.082e-03 (1.743e-18) &	 {\bf 2.00} 	 & {\bf 2.92e-01 }	\\
& HA$_{10}$ (0.05)	 & {\bf 1.020e-03} (4.359e-19) &	 3.00 	 & 2.11e+01 	 \\
& HA$_{20}$ (0.05)	 & {\bf 1.020e-03} (1.090e-18) &	 3.00 	 & 7.16e-01 	\\
& HA$_{30}$ (0.05)	 & 1.022e-03 (2.397e-18) &	 4.00 	 & 1.11e+00 	\\ \hline
		\multirow{5}{*}{\begin{tabular}[c]{@{}c@{}}Space-ga\\ (3107,6)\end{tabular}}
	& $L_1$	 & 1.865e-04 (1.907e-19) &	 {\bf 3.00} 	 & 1.65e+00 	\\
& $L_{1/2}$	 & 1.454e-04 (2.179e-19) &	 6.00 	 & 1.94e+01 	\\
&  AD$_{p}$	 & 1.448e-04 (2.179e-19) &	 6.00 	 & 3.95e+01 	\\
& HA$_{5}\;$ (0.01)	 & 1.640e-04 (2.724e-19) &	 5.00 	 & 9.69e-01 	\\
& HA$_{10}$ (0.01)	 & {\bf 1.443e-04} (2.997e-19) &	 5.00 	 &{\bf 9.67e-01} 	\\
& HA$_{20}$ (0.01)	 & 1.448e-04 (2.997e-19) &	 6.00 	 & 1.98e+00 	\\
& HA$_{30}$ (0.01)	 & 1.448e-04 (8.172e-20) &	 6.00 	 & 1.83e+00 	\\ \hline
		\multirow{5}{*}{\begin{tabular}[c]{@{}c@{}}Energy\\ (768,8)\end{tabular}}
		& $L_1$	 & 1.277e-04 (1.907e-19) &	 {\bf 4.00} 	 & 6.87e-01 	\\
& $L_{1/2}$	 & 1.240e-04 (1.907e-19) &	{\bf 4.00 }	 & 1.39e+00 	 \\
& AD$_{p}$	 & 1.158e-04 (1.090e-19) &	 6.00 	 & 2.82e+01 	\\
& HA$_{5}\;$ (0.12)	 & 1.166e-04 (1.090e-19) &	{\bf 4.00} 	 & {\bf 6.77e-01 }	\\
& HA$_{10}$ (0.12)	 & 1.166e-04 (1.090e-19) &	 {\bf 4.00 }	 & 7.18e-01 	\\
& HA$_{20}$ (0.12)	 & {\bf 1.154e-04} (1.907e-19) &	{\bf 4.00 }	 & 8.46e-01 	\\
& HA$_{30}$ (0.12)	 & {\bf 1.154e-04} (9.535e-20) &	{\bf 4.00 }	 & 1.84e+00 	\\ \hline
		
		\multirow{5}{*}{\begin{tabular}[c]{@{}c@{}}SkillCraft1\\ (3338,18)\end{tabular}}
		& $L_1$	 & 2.293e-04 (2.724e-20) &	 8.00 	 & 2.42e+00 	\\
& $L_{1/2}$	 & 2.296e-04 (4.903e-19) &	 8.00 	 & 2.53e+00 	 \\
&  AD$_{p}$	 & 2.278e-04 (2.452e-19) &	 8.00 	 & 3.61e+00 	 \\
& HA$_{5}\;$ (0.05)	 & {\bf 2.276e-04} (1.907e-19) &	 {\bf 6.00} 	 & {\bf 8.35e-01} 	 \\
& HA$_{10}$ (0.05)	 & 2.278e-04 (4.359e-19) &	 {\bf 6.00} 	 & 9.77e-01 	\\
& HA$_{20}$ (0.05)	 & 2.278e-04 (4.359e-19) &	 {\bf 6.00 }	 & 1.15e+00 	 \\
& HA$_{30}$ (0.05)	 & {\bf 2.276e-04} (1.634e-19) &	{\bf 6.00} 	 & 1.41e+00 	\\ \hline
	\end{tabular}}
\end{table}

\section{Conclusion}

We  delve into the current popular topic of $L_1$ over $L_2$ minimization,
 verifying its partial smoothness and prox-regularity.
Based on this, we validate ADMM$_p$ \cite{Tao20} (or ADMM$_p^+$ \cite{TaoZhang23}) with the property of identifying the active manifold in finite iterations. {\it It highlights that these $L_1/L_2$ proximal-friendly approaches preserve
 the essential ``support detection" property.}
 It provides a better understanding of the landscape of $L_1$ over $L_2$ minimization.

Furthermore, we propose a heuristic acceleration framework. It consists of two phases: first, we employ ADMM$_p$ or ADMM$_p^+$
 to identify the active manifold, then apply a globalized semismooth Newton method within this manifold.
Theoretically, we show that it achieves a superlinear/quadratic convergence rate under certain conditions.
Extensive numerical experiments for sparse recovery on synthetic and real datasets
 demonstrate its superiority compared to the existing state-of-the-art methods.



\section*{Acknowledgments}
 The first author extends  gratitude to Prof. Radu Ioan Bo\c{t} from the University of Vienna for his valuable and constructive comments on the manuscript. The initial version  was conducted during she visited  him in 2023.

  \begin{appendices}
      \section{Proof of Lemma \ref{proxreg}  }\label{AppA}

\begin{proof}First, invoking \cite[Lemma 2.1(ii)]{BRDL22} and $\partial(g({\bar{\h x}}) f({\bar{\h x}}))=g({\bar{\h x}})\partial f({\bar{\h x}})$ (due to $f$  prox-regular at ${\bar{\h x}}$ and $g({\bar{\h x}})> 0$),
 \begin{eqnarray*}\partial \left(\frac{f}{g}\right)({\bar{\h x}})=\frac{g({\bar{\h x}})\partial f({\bar{\h x}})-f({\bar{\h x}})\nabla g({\bar{\h x}})}{g({\bar{\h x}})^2}.
 \end{eqnarray*}
 We only need to prove for any $\bar{\h v}\in\partial f(\bar{\h x})$, there exists $\rho>0$ such that
  \begin{eqnarray}\label{proxfr}&&\frac{f({\h y})}{g({\h y})}-\frac{f({\h x})}{g({\h x})}-\left \langle \frac{{\h v} g({\h x})-f({\h x})\nabla g({\h x})}{g({\h x})^2}, {\h y}-{\h x}\right\rangle \nn\\
  &&\ge -\frac{\rho}{2}\|{\h x}-{\h y}\|_2^2,\end{eqnarray}
 whenever ${\h x}$ and ${\h y}$ are near $\bar{\h x}$ with $f({\h x})/g({\h x})$ near $f({\bar{\h x}})/g({\bar{\h x}})$, and
 $\displaystyle{\frac{{\h v} g({\h x})-f({\h x})\nabla g({\h x})}{g({\h x})^2}}$ near $\displaystyle{\frac{{\bar{\h v}} g({\bar{\h x}})-f({\bar{\h x}})\nabla g({\bar{\h x}})}{g({\bar{\h x}})^2}}$ (i.e., ${\h v}\in\partial f({\h x})$ is near ${\bar{\h v}}$).
 Below, we show that
 \begin{eqnarray*}&&\left|\frac{f({\h x})}{g({\bar{\h x}})}-\frac{f({\bar{\h x}})}{g({\bar{\h x}})}\right |
 \le \left| \frac{f({\h x})}{g({\bar{\h x}})}-\frac{f({{\h x}})}{g({{\h x}})}\right|+\left|\frac{f({\h x})}{g({{\h x}})}-\frac{f({\bar{\h x}})}{g({\bar{\h x}})}\right|\nn\\
 &&\le\frac{|f({\h x})(g({\h x})-g({\bar{\h x}}))|}{g({\bar{\h x}})g({\h x})}+\left|\frac{f({\h x})}{g({{\h x}})}-\frac{f({\bar{\h x}})}{g({\bar{\h x}})}\right|. \end{eqnarray*}
 Thus, $f({\h x})$ near $f({\bar{\h x}})$ whenever $f({\h x})/g({\h x})$ near $f({\bar{\h x}})/g({\bar{\h x}})$ and ${\h x}$ near ${\bar{\h x}}$.
 Since $f$ is prox-regular at the point ${\bar{\h x}}$, then
 for the ${\bar{\h v}}\in\partial f({\bar{\h x}})$, there exists $\rho_1>0$ such that
 $$  f({\h y})-f({\h x}) - \langle {\h v}, {\h y}-{\h x} \rangle \ge -\frac{\rho_1}{2}\|{\h x}-{\h y}\|_2^2,$$
whenever ${\h x}$ and ${\h y}$ are near ${\bar{\h x}}$ with $f({\h x})$ near $f({\bar{\h x}})$, and ${\h v}\in \partial f({\h x})$ is near ${\bar{\h v}}$  where ${\bar{\h v}}\in\partial f({\bar{\h x}})$. In view of $g({\bar{\h x}})>0$,
there exists two constants $m_1$ and $M_1$ such that
 \begin{eqnarray}\label{lug} 0<m_1\le g({\h x})\le M_1,\end{eqnarray}
 when ${\h x}$ is near ${\bar {\h x}}$.
 Furthermore,
\begin{eqnarray}\label{eq1}\frac{1}{g({\h x})}\left[f({\h y})-f({\h x})-\langle {\h v},{\h y}-{\h x} \rangle \right]\ge -\frac{{\tilde \rho}_1}{2}\|{\h x}-{\h y}\|_2^2,\end{eqnarray}
 where ${\tilde \rho}_1 = \rho_1/m_1.$
 To proceed, we
 verify the following inequality:
 \begin{eqnarray}\label{eq2}&&\frac{f({\h y})}{g({\h y})}\frac{1}{g({\h x})}\left( g({\h x})-g({\h y})\right)+\frac{f({\h x})}{g({\h x})^2} \langle \nabla g({\h x}),{\h y}-{\h x} \rangle\nn\\
 &&\ge -\frac{\rho_2}{2}\|{\h x}-{\h y}\|_2^2.\end{eqnarray}
 We divide into two cases to prove:  (a) $f({\h x})=0$; (b) $f({\h x})\neq0$.\\
Case (a): $f({\h x})=0$.
 If $f({\h y})=0$, the above inequality holds obviously. If $f({\h y})\neq {\bf 0}$,
 Because $f$ is locally Lipschitz continuous ${\bar{\h x}}$, there exist two constants $m_2$ and $M_2$ such that $m_2\le f({\h y})\le M_2$
 ($M_2$, $m_2$ can take the same sign as  $f({\h y})$) when
 ${\h y}$ is near ${\h {\bar x}}$. Without loss of generality, we assume that $m_2>0$.
 By using the locally Lipschitz continuity of $g$ near ${\bar{\h x}}$ (with constant $L_g$) and combining with (\ref{lug}),
 there exists a constant $\rho_2$ such that (\ref{eq2}) holds.\\
Case (b): $f({\h x})\neq0$. We first show that $H({\h x})=\frac{f({\h x})}{g({\h x})}$ is locally Lipschitz continuous around ${\bar{\h x}}$.
Let $L_H=\frac{M_2}{m_1^2} L_g+\frac{L_f}{m_1}$.
\begin{eqnarray*}&& |H({\h x})-H({\h y})| \le\frac{H({\h x})}{g({\h y})}|g({\h y})-g({\h x})|+\frac{1}{g(\h y)}|f({\h x})-f({\h y})|\nn\\
&& \le  L_H\|{\h x}-{\h y}\|_2.\end{eqnarray*}
\begin{eqnarray*}\lefteqn{ \frac{f({\h y})}{g({\h y})g({\h x})}\left( g({\h x})-g({\h y})\right)+\frac{f({\h x})}{g({\h x})^2} \langle \nabla g({\h x}),{\h y}-{\h x} \rangle}\nn\\
&= &\frac{1}{g({\h x})}\left( H({\h y})-H({\h x})\right)(g({\h x})-g({\h y}) )\nn\\
&+&\frac{f({\h x})}{g({\h x})^2}\left(g({\h x})-g({\h y}) +\langle \nabla g({\h x}),{\h y}-{\h x} \rangle \right)\nn\\
&\ge& -\frac{1}{m_1}L_HL_g\|{\h x}-{\h y}\|_2^2 -\frac{L_{\nabla g}}{2}\frac{M_2}{m_1^2}\|{\h x}-{\h y}\|_2^2,
\end{eqnarray*}
where  $L_{\nabla g}$ is the locally Lipschitz constants of $\nabla g$.
Thus, the inequality (\ref{eq2}) is valid with $\rho_2=\frac{2L_H L_g}{m_1}+\frac{M_2 L_{\nabla g}}{m_1^2}$.

By multiplying (\ref{eq1}) on both sides with $\frac{g({\h y})}{g({\h y})}$ and adding with (\ref{eq2}), it leads to
(\ref{proxfr}) is valid with $\rho={\tilde \rho}_1+\rho_2$.
Therefore, the function $\displaystyle{\frac{f}{g}}$ is prox-regular at the point
${\bar{\h x}}$.
 \end{proof}

\section{Proof of Proposition \ref{PhiPS}  }\label{AppB}

\begin{proof} First, we show that
 $h(\cdot)$ defined in (\ref{hfun})  is partly smooth at ${\h x}_0$ relative to ${\cal M}_{{\h x}_0}$.
We divide into two cases to verify: (a) ${\cal X}={\mathbb R}^n$; (b) ${\cal X}={\mathbb R}_+^n$.\\[0.1cm]
Case (a). By invoking Definition \ref{def2.3}, it is evident that properties
(i) and (iv) hold. Additionally, we can refer to Equation (2.3) in \cite{ZengYuPong20} to show the validity of property (ii).
We only need to verify the property (iii) normal sharpness, i.e.,
\begin{eqnarray*} dh({\h x}_0)(-{\h w})>-dh({\h x}_0)({\h w})\end{eqnarray*}
for all nonzero directions ${\h w}$ in $N_{{\cal M}_{\h x_0}}({\h x})$.
Note that
\begin{eqnarray*} h({\h x}_0+\tau {\h w})=\frac{\|{\h x}_0+\tau{\h w}\|_1}{\|{\h x}_0+\tau{\h w}\|_2}=\frac{\|{\h x}_0\|_1+\tau\|{\h w}\|_1}{\sqrt{\|{\h x}_0\|_2^2+\tau^2\|{\h w}\|_2^2}}\end{eqnarray*}
when ${\h w}$ in $N_{{\cal M}_{\h x_0}}({\h x})$. Recall $h(\cdot)$ is defined in (\ref{hfun}).
By some routine calculations, it yields that
\begin{eqnarray*}dh({\h x}_0)({\h w})= \lim_{\tau\downarrow 0}\inf_{{\bar{\h w}}\to{\h w}}\frac{h({\h x}_0+\tau{\bar{\h w}})-h({\h x}_0)}{\tau} =\frac{\|{\h w}\|_1}{\|{\h x}_0\|_2},\end{eqnarray*}
and
\begin{eqnarray*} dh({\h x}_0)(-{\h w})= \lim_{\tau\downarrow 0}\inf_{{\bar{\h w}}\to{\h w}}\frac{h({\h x}_0-\tau{\bar{\h w}})-h({\h x}_0)}{\tau} =\frac{\|{\h w}\|_1}{\|{\h x}_0\|_2}. \end{eqnarray*}
Thus,
the property (iii) of Definition \ref{def2.3} holds for the $h(\cdot)$ defined in (\ref{hfun}).

\noindent Case (b). The proof for property (i)-(iii) are similar to Case (a).
For (iv), we only need to show that ${\partial h}$ is inner semicontinuous at ${\h x}_0$ relative ${\cal M}_{{\h x}_0}$.
First note that
\begin{eqnarray*} h({\h x})=h_1(\h x)+\iota_{{\mathbb R}^n_+}(\h x),\end{eqnarray*}
where $h_1(\h x):=\frac{{\h e}^\top {\h x}}{\|\h x\|_2}$.
According to \cite[Corollary 10.9 and Exercise 10.10]{RockWetsVA}, we have
\begin{eqnarray} \label{sumrule} \partial h({\h x})=\partial h_1 (\h x) +N_{{\mathbb R}^n_+}(\h x),\end{eqnarray}
where $N_{{\mathbb R}^n_+}(\h x)$ is the limiting normal cone of ${{\mathbb R}^n_+}$ at $\h x$.
\begin{eqnarray}\label{normcone}
N_{{\mathbb R}^n_+}(\h x) = \{{\h d}\in \mathbb R^n \; |\; {\h d}_{\Lambda} ={\bf 0},\; {\h d}_{{\Lambda}^c} \le {\bf 0},\; \Lambda={\text{supp}}(\h x)\},
\end{eqnarray}
and invoking \cite[Lemma 2.1(ii)]{BRDL22},
$\partial h_1(\h x) = \frac{{\bf e}}{r} - \frac{a}{r^3} {\h x},$
where $a=\|{\h x}\|_1$ and $r=\|{\h x}\|_2$.

 Next, without loss of generality, we assume that
  $({\h x}_0)^\top=\left( ({\h x}_{1})^\top, ({\h x}_{2})^\top\right)$ where
  ${\h x}_1= ({\h x}_0)_{\Lambda}$ ($\Lambda={\text{supp}}({\h x}_0)$) and ${\h x}_2= {\bf 0}$.

  For any ${\h y}\in\partial  h({\h x}_0)$,
 it follows from (\ref{sumrule}) and  (\ref{normcone}) that
\begin{eqnarray} \label{vecy} {\h y}=\left(\frac{{\bf e}}{r}-\frac{a}{r^3} {\h x}_0\right)+{\h d}\end{eqnarray}
  where $a=\|{\h x}_0\|_1$ and $r=\|{\h x}_0\|_2$ and
  $({\h d})^\top=\left(({\h d}_1)^\top,({\h d}_2)^\top\right)$ (where ${\h d}_1$ and ${\h d}_2$ are in the same dimension
   as ${\h x}_1$ and ${\h x}_2$) and ${\h d}_1={\bf 0}\in{\mathbb R}^{\sharp(\Lambda)}$ and ${\h d}_2\le {\bf 0}$.

  For any sequence of points ${\h x}^k$ in ${\cal M}_{{\h x}_0}$ approaching ${\h x}_0$, we take ${\h y}^k=\left(\frac{{\bf e}}{r_k}-\frac{a_k}{({r_k})^3} {\h x}^k\right)+{\h d}$ ($a_k=\|{\h x}^k\|_1$, $r_k=\|{\h x}^k\|_2$) where ${\h d}$ is given in (\ref{vecy}).
 We have ${\h y}^k\in\partial h({\h x}^k)$ due to ${\h x}^k\in{\cal M}_{{\h x}_0}$ and ${\h y}^k\to {\h y}$.
  Thus, ${\partial h}$ is inner semicontinuous at ${\h x}_0$ relative ${\cal M}_{{\h x}_0}$.
So, $h$ is partly smooth at any ${\h x}_0$ relative to ${\cal M}_{{\h x}_0}$.

   For both cases, we show that $h(\cdot)$ defined in (\ref{hfun})  is partly smooth at ${\h x}_0$ relative to ${\cal M}_{{\h x}_0}$.
  Invoking \cite[Corollary 4.7]{Lewis02}, $F_u(\cdot)+\iota_{\cal X}(\cdot)$ is partly smooth at ${\h x}_0$ relative to ${\cal M}_{{\h x}_0}$.
\end{proof}

\section{Proof of Theorem  \ref{thmidentify}}\label{AppC}
\begin{proof}First, we have ${\h x}^\infty\neq{\bf 0}$ due to ${A^\top {\h b}}\not\in {\cal X}^o$.
For the comprehensive proof of this argument, we refer the reader to the detailed proof of \cite[Theorem 5.8]{Tao20} and \cite[Theorem 8]{TaoZhang23}.
For the model (\ref{L1o2uncon}) with the general $\Phi(\h x)$ ($L$-smooth),
the global convergence of (\ref{ADMMschI})
 holds  according to Theorem \ref{thmglobal}.
For either ${\cal X}={{\mathbb R}^n_+}$ or ${\cal X}={\mathbb R}^n$, according to  \cite[Corollary 10.9]{RockWetsVA},  the following relation holds
\begin{eqnarray} \label{calsumrule} \partial\left(\frac{\|\cdot\|_1}{\|\cdot\|_2}+\iota_{\cal X}(\cdot)\right)({\h x}) =\partial\frac{\|{\h x}\|_1}{\|{\h x}\|_2} + \partial \iota_{\cal X}({\h x}).\end{eqnarray}
This is a consequence of the regularity of both $\frac{\|\cdot\|_1}{\|\cdot\|2}$ and $\iota_{\cal X}(\cdot)$.
By invoking the optimality condition of (\ref{ADMMschI}) and combining (\ref{calsumrule}), we have that
\begin{eqnarray*}
\left\{\begin{array}{l}
{\bf 0}\!\in\!\gamma\left(\displaystyle{\frac{\sign(\h x^{k+1})}{r_{k+1}}}\!-\!\frac{a_{k+1}}{(r_{k+1})^3}{\h x}^{k+1}+\partial \iota_{\cal X}(\h x^{k+1})\right)\\[0.25cm]
\;\;+\beta\left({\h x}^{k+1}-{\h y}^k\!+\!\displaystyle{\frac{{\h z}^k}{\beta}}\right)\\[0.25cm]
\nabla \Phi({\h y}^{k+1})+\beta({\h y}^{k+1}-{\h x}^{k+1}-\frac{1}{\beta}{\h z}^k) =0\\[0.25cm]
{\h z}^{k+1} = {\h z}^k+\beta ({\h x}^{k+1}-{\h y}^{k+1}),
\end{array}\right.
\end{eqnarray*}
where $a_{k+1}=\|{\h x}^{k+1}\|_1$ and $r_{k+1}=\|{\h x}^{k+1}\|_2$.
By substituting the third into the second, we have $\nabla \Phi({\h y}^{k+1})={\h z}^{k+1}$.
Furthermore,  by invoking (\ref{calsumrule}), we arrive at
\begin{eqnarray*}
&&\dist({\partial (F_u(\cdot)+ \iota_{\cal X}(\cdot))({\h x}^{k+1}),{\bf 0}}) \nn\\
&&=\dist\left[\gamma\left(\frac{\sign(\h x^{k+1})}{r_{k+1}}-\frac{a_{k+1}}{(r_{k+1})^3}{\h x}^{k+1}+\partial \iota_{\cal X}(\h x^{k+1})\right)\right.\nn\\
&&\;\;\left.+\nabla \Phi({\h x}^{k+1}),{\bf 0}\right]\nn\\
&&\le\|-\beta({\h x}^{k+1} -{\h y}^k+\frac{1}{\beta}{\h z}^k) +\nabla \Phi({\h x}^{k+1})\|_2\nn\\
&&=\| -{\h z}^{k+1}+{\beta}({\h y}^k-{\h y}^{k+1})+\nabla \Phi({\h x}^{k+1})\|_2\nn\\
&&=\| -\nabla\Phi({\h y}^{k+1}) +\nabla\Phi({\h x}^{k+1}) +\beta({\h y}^k-{\h y}^{k+1})\|_2\nn\\
&&\le L\|{\h x}^{k+1}-{\h y}^{k+1}\|_2+\beta\|{\h y}^k-{\h y}^{k+1}\|_2\nn\\
&&\le \frac{L}{\beta}\|{\h z}^k-{\h z}^{k+1}\|_2+\beta\|{\h y}^k-{\h y}^{k+1}\|_2.
\end{eqnarray*}
On the other hand, it follows from $\nabla \Phi({\h y}^{k+1})={\h z}^{k+1}$
that
\begin{eqnarray*}\|{\h z}^{k+1}-{\h z}^k\|_2=\|\nabla \Phi({\h y}^{k+1})-\nabla \Phi({\h y}^{k})\|_2 \le L\|{\h y}^{k+1}-{\h y}^k\|_2.\end{eqnarray*}
By combining the last two inequalities, we have
\begin{eqnarray*}
&&{\dist(\partial (F_u(\cdot)+ \iota_{\cal X}(\cdot))({\h x}^{k+1}),{\bf 0})}\nn\\
&&\le \left(\frac{L^2}{\beta}+\beta\right)\|{\h y}^k -{\h y}^{k+1}\|_2\rightarrow 0,
\end{eqnarray*}
as $k\to \infty$.

In addition, the function $F_u({\h x})+ \iota_{\cal X}({\h x})$ is partly smooth at the point
${{\h x}^\infty}$ relative to the manifold ${\cal M}_{{\h x}^\infty}$ (as shown in Proposition \ref{PhiPS}).
Next, we show it is prox-regular at ${\h x}^\infty$.
From Definition \ref{defprox}, we see that $f({\cdot})=\|\cdot\|_1+\iota_{\cal X}(\cdot)$ is prox-regular ${\h x}^\infty$ for ${\cal X}={\mathbb R}^n$ or ${\cal X}={\mathbb R}_+^n$.
Thus, by setting $f(\cdot)=\|\cdot\|_1+\iota_{\cal X}(\cdot)$ and
$g(\cdot)=\|\cdot\|_2$ in Lemma \ref{proxreg}, we have that
$h(\cdot)$ defined in (\ref{hfun}) is prox-regular at ${\h x}^\infty$.

Invoking \cite[Lemma 2.1(i)]{BRDL22}, we have that
\begin{eqnarray*}\partial(h+\Phi)({\h x})=\partial h({\h x})+\nabla\Phi({\h x}),\end{eqnarray*}
due to $h$ and $\Phi$ are regular.
Noting that for any ${\h v}\in\partial(F_u+ \iota_{\cal X})({\h x}^\infty)$, we have
${\h v}={\h v}_1+{\h v}_2$ where ${\h v}_1\in \partial h({\h x}^\infty)$ and
${\h v}_2\in \nabla \Phi({\h x}^\infty)$.
By using $h(\cdot)$ and $\Phi(\cdot)$ are prox-regular at ${\h x}^\infty$ and recall
Definition \ref{defprox},
we can obtain that $F_u(\cdot)+ \iota_{\cal X}(\cdot)$ is prox-regular
at ${\h x}^\infty$.
According to Theorem \ref{identify}, we have that for all large $k$,
${\h x}_k\in{\cal M}_{{\h x}^\infty}.$
\end{proof}

\section{Proof of Theorem \ref{sublim}}\label{AppD}
 \begin{proof}
The assertion (i) is valid obviously.
For (ii), the direction ${\h u}^j$ is always a decent direction of $\varphi(\cdot)$ if the sequence doesn't achieve the
stationary point (i.e., $\varphi({\hat{\h u}})=0$). Since
$\varphi$ is level bounded on ${\cal M}_{{\h x}^\infty}$, the sequence of $\{{\h u}^j\}$ is bounded.
Assume that ${\hat{\h u}}$ be an accumulation point of $\{{\h u}^j\}$.
Then, there exists a subsequence $\{{\h u}^{k_j}\}$ converging to ${\hat{\h u}}$ and
$\nabla \varphi({\hat{\h u}})=0$ by using the standard analysis \cite[Theorem 6.3.3]{doi:10.1137/1.9781611971200}.

Next, invoking for all $V\in\partial^2 \varphi({\hat{\h u}})$ positive definite,
then there exists an open ball ${\cal B}({\hat{\h u}})$ centered at ${\hat{\h u}}$ such that
the matrices set of ${\cal V}=\{V\in \partial^2 \varphi({\h u})|{\h u}\in{\cal B}({\hat{\h u}})\}$ are uniformly
positive definite.
Next, we show that for $j$ sufficiently large, $V_{k_j}\in{\cal V}$, we can always find an approximate solution of
${\h d}^{k_j}$ of (\ref{newton}) with $j:=k_j$
 such that both
(\ref{vj}) with $j:=k_j$ and
\begin{eqnarray}\label{suffdes} \langle\nabla \varphi({\h u}^{k_j}),{\h d}^{k_j} \rangle\!\le\!-\nu_{k_j}\|{\h d}^{k_j}\|_2^2\end{eqnarray}
hold.  Additionally, (\ref{vj}) with $j:=k_j$  leads to
\begin{eqnarray}\label{nablavarphi}\|\nabla\varphi({\h u}^{k_j})\|_2\le \frac{1}{1-\eta}\|V_{k_j}\|_2\|{\h d}^{k_j}\|_2.\end{eqnarray}
Below, we prove in two steps: (a) there exists a subsequence $\{{\h d}^{k_j}\}$ such that (\ref{newton}), (\ref{vj}) and (\ref{suffdes}) hold.
(b) Using induction, we can see that there exists an index ${\widehat k}$ such that
for $j\ge {\widehat k}$, ${\h u}^j\in {\cal B}({\hat{\h u}})$, $\alpha_j=1$ (taking unit step size).
\begin{itemize}
\item[(a)] Since the set ${\cal V}$ is uniformly positive definite, and define $\rho=\inf_{V\in{\cal V}} \lambda_{\min}(V)$ and thus $\rho>0$.
There exists a subsequence $k_j$ such that $\nabla\varphi({\h u}^{k_j})\to 0$ and
an approximate solution ${\hat{\h d}}^{k_j}$ of (\ref{newton}) satisfying
\begin{eqnarray}\label{semismooth}&& \langle \nabla\varphi({\h u}^{k_j}),{\hat{\h d}}^{k_j} \rangle \nn\\
&&=\langle \nabla \varphi({\h u}^{k_j})+V_{k_j}{\hat {\h d}}^{k_j}, {\hat {\h d}}^{k_j} \rangle-\langle V_{k_j}{\hat {\h d}}^{k_j}, {\hat {\h d}}^{k_j}\rangle\nn\\
 &&\overset{(\ref{vj})}{\le}\eta_{k_j}\|\nabla \varphi({\h u}^{k_j})\|_2\|{\hat{\h d}}^{k_j}\|_2-\rho\|{\hat{\h d}}^{k_j}\|_2^2\nn\\
 &&\le \|\nabla \varphi({\h u}^{k_j})\|_2^2\|{\hat{\h d}}^{k_j}\|_2-\rho\|{\hat{\h d}}^{k_j}\|_2^2\nn\\
 &&\le \frac{1}{1-\eta}\|V_{k_j}\|_2\|{\hat{\h d}}^{k_j}\|_2^2\|\nabla \varphi({\h u}^{k_j})\|_2-\rho\|{\hat{\h d}}^{k_j}\|_2^2\nn\\
 &&\le -\frac{\rho}{2}\|{\hat{\h d}}^{k_j}\|_2^2,
 \end{eqnarray}
 The third-to-last and the last inequalities of  (\ref{semismooth})  are due to the definition of $\eta_{k_j}$ and
 $\nabla \varphi({\h u}^{k_j})\to 0$, respectively. The penultimate is due to (\ref{nablavarphi}).
Inequality (\ref{semismooth}) implies that ${\hat{\h d}}^{k_j}$ satisfies
 (\ref{newton}), (\ref{vj}) and (\ref{suffdes}).
It follows from \cite[Lemma 5.2]{QiSun} that for all sufficiently large $j$, $\alpha_{k_j}=1$,
 and $\h{u}^{k_j+1}=\h{u}^{k_j}+\h{d}^{k_j}.$
\item[(b)] By invoking  $\nabla \varphi$ is semismooth at ${\hat{\h u}}$ or
 $\nabla \varphi$ is  strongly semismooth at ${\hat{\h u}}$, for sufficiently large $k$, we have
 \begin{eqnarray*} \|{\h u}^k+{\h d}^k-{\hat{\h u}}\|_2=o(\|{\h u}^k-{\hat{\h u}}\|_2)\end{eqnarray*}
 or
 \begin{eqnarray*} \|{\h u}^k+{\h d}^k-{\hat{\h u}}\|_2=O(\|{\h u}^k-{\hat{\h u}}\|_2^2),\end{eqnarray*}
 respectively.
 Then, for all sufficiently large $j$, ${\h u}^{k_j}\in {\cal B}({\hat{\h u}})$,
 thus ${\h u}^{k_j+1}\in {\cal B}({\hat{\h u}})$. Then, following the above proof, we see that
 $V_{k_j+1}\in {\cal V}$ and there exists ${\h d}^{k_j+1}$ satisfying  (\ref{newton}), (\ref{vj}) and (\ref{suffdes}).
 Thus, $\alpha_{k_j+1}=1$.
  By induction, there exists an index $k_J$ such that
for $j\ge k_J$, ${\h u}^j\in {\cal B}({\hat{\h u}})$, $\alpha_j=1$ (taking unit step size).
\end{itemize}
The assertion (ii) follows directly. Clearly, assertion (iii) is valid.
\end{proof}
  \end{appendices}


\end{document}